\documentclass[11pt]{article}%
\usepackage{float} % fix table inside context 
\usepackage{fullpage}
\usepackage{amsfonts}
\usepackage{amsmath}
\usepackage{amssymb}
\usepackage{amsbsy}
\usepackage{amsthm}
\usepackage{thmtools} % This helps cleveref recognize environments
\usepackage{cleveref}
\usepackage{mathtools}
\usepackage{epsfig}
\usepackage{graphicx}
\usepackage{colordvi}
\usepackage{graphics}
\usepackage{color}
\usepackage{bm}
\usepackage{verbatim} 
\numberwithin{equation}{section}

\usepackage[numbers,sort]{natbib}

\newtheorem{theorem}{Theorem}[section]
\newtheorem{thm}{Theorem}[section]

\newtheorem{alg}[thm]{Algorithm}
\newtheorem{remark}[thm]{Remark}

\begin{document}

\title{Continuous-data-assimilation-enabled fast and robust convergence of an Uzawa-based solver for Navier-Stokes equations with large Reynolds number}

%\title{Data-assimilation-enabled and -accelerated convergence of a fast Uzawa-based solver for Navier-Stokes equations with large Reynolds number}

\author{
Victoria Luongo Fisher \thanks{\small School of Mathematical and Statistical Sciences, Clemson University, Clemson, SC, 29364. Partially supported by Department of Energy grant DE-SC0025292 (vluongo@clemson.edu)}
\and
Jessica C. Franklin \thanks{\small School of Mathematical and Statistical Sciences, Clemson University, Clemson, SC, 29364. Partially supported by Department of Energy grant DE-SC0025292 (jfrank8@clemson.edu)}
\and
Leo G. Rebholz\thanks{\small School of Mathematical and Statistical Sciences, Clemson University, Clemson, SC, 29364.  Partially supported by Department of Energy grant DE-SC0025292  (rebholz@clemson.edu)}
}
\maketitle

\begin{abstract}  
This paper shows how continuous data assimilation (CDA) can be used to provably enable and accelerate convergence of a (efficient at each iteration due to a physics-splitting, but generally slowly converging and not robust) nonlinear solver  for incompressible Navier-Stokes equations (NSE).  Herein we develop, analyze and test an Uzawa-based nonlinear solver for incompressible NSE that incorporates partial solution data into the iteration through continuous data assimilation (CDA-Uzawa).  We rigorously prove that i) CDA-Uzawa will accelerate a converging Uzawa iteration, and more partial solution data yields more acceleration, and ii) with enough partial solution data CDA-Uzawa will converge for arbitrarily large Reynolds numbers, even if multiple NSE solutions exist. In the case of noisy data, we prove that the convergence results hold down to the size of the noise, and we propose a strategy to pass to Newton once CDA-Uzawa convergence reaches its lower limit.  Results of several numerical tests illustrate the theory and show CDA-Uzawa is a very effective and efficient solver.   While this paper focuses a particular splitting-based solver for the NSE, the key ideas are quite general and extendable to a wide class of nonlinear solvers.  
\end{abstract}

\section{Introduction}

Simulating fluid flow is an important subtask in many applications across science and engineering.  For incompressible, Newtonian fluids such as water and oil, this is done by solving the incompressible Navier-Stokes equations (NSE) which are given on $\Omega\times (0,T],$ where $\Omega \subset \mathbb{R}^d$ ($d =2, 3$), by
\begin{align}
   \label{eq1} u_t + u \cdot \nabla u + \nabla p - \nu \Delta u &= f,\\
   \label{eq2} \nabla \cdot u &= 0, \\
   \label{eq3} u |_{\partial \Omega} &= 0, \\
   \label{eq4b} u(x,0) & = u_0(x),
\end{align}
where $u$ is the velocity, $p$ the pressure, $f$ an external forcing, $\nu$ the kinematic viscosity, and 
$u_0$ is the initial condition. Our analysis considers for simplicity the case of homogeneous Dirichlet boundary conditions, but our results can be extended to nonhomogeneous mixed Dirichlet/Neumann boundary conditions without significant difficulties.  We will focus on nonlinear solvers in this paper, and so restrict to the steady case, i.e. where $u_t=0$.  However, our results are easily extendable to the nonlinear problems that arise at each time step of a temporal discretization of the time dependent case (by replacing $u_t$ in \eqref{eq1} by $\sigma u$ where $\sigma \sim \Delta t^{-1}$ and adjusting the known right hand side appropriately).  Solving the steady NSE system is well-known to be challenging and become more difficult as the Reynolds number $Re\sim \nu^{-1}$ increases.  Even though time dependent problems have greater challenges in resolving fine level physics compared to the steady case, time dependent solvers often avoid issues with nonlinear solves by linearizing in time or by having good initial guesses (i.e. the solution at the last time step).

Perhaps the most commonly used solver for the NSE is the Picard iteration, sometimes also called Oseen iteration, which is defined by
\begin{align}
   \label{pic1} u_{k} \cdot \nabla u_{k+1} + \nabla p_{k+1} - \nu \Delta u_{k+1} &= f,\\
   \label{pic2} \nabla \cdot u_{k+1} &= 0.
\end{align}
Picard is known to converge linearly for any initial guess $u_0\in H^1(\Omega)$ provided the problem parameters are sufficiently small.  That is, if
\[
\alpha := M\nu^{-2} \| f\|_{H^{-1}} <1,
\]
where $M$ is a constant depending only on the domain size (see section 2 for a precise definition of $M$) \cite{GR86,temam}, then the linear convergence rate of Picard will be $\alpha$ \cite{GR86,temam}. However, for larger $\alpha$ (and thus equivalently, larger $Re$), Picard will usually fail to converge \cite{PRX19}.  Note also that for sufficiently large $\alpha$, the steady NSE are known to admit multiple solutions \cite{Laytonbook} which creates additional numerical difficulties.

To improve the performance of the Picard solver in the setting where partial solution data is available \footnote{Partial solution data can be obtained from, e.g.,  physical measurements or observations, or also if the holder of a very large scale computed solution is passing it to another interested party but can passes some of the solution due to its size.}, it was proposed in \cite{LHRV23} to incorporate the data into the nonlinear solver using continuous data assimilation (CDA) by modifying the Picard scheme via
	\begin{align*}
		-\nu \Delta u_{k+1}+u_{k}\cdot\nabla u_{k+1}+ \nabla p_{k+1} + \mu I_H(u_{k+1} - u)&={f}, \\
		\nabla\cdot {u}_{k+1}&=0.
	\end{align*}
Here, $\mu>0$ is the nudging (or penalty) parameter, and $I_H(u)$ is an appropriate interpolant of a NSE velocity solution $u$ with characteristic point spacing $H$. The true solution interpolant $I_H(u)$ is considered known since we assume that we know partial solution data by being given $u(x_i)$ for $i=1,2,...,N$  (more discussion of $I_H$ is given in Section 2).  It is proven in \cite{LHRV23} that for any $\alpha>0$, if $H\alpha^{1/2} \stackrel{\sim}{<} 1$ then CDA-Picard is guaranteed to converge linearly to the NSE solution from which the partial solution data was taken with rate $O(H^{1/2} \alpha)$.  Thus, if Picard is converging, then using CDA to incorporate partial solution data will accelerate the convergence; if Picard is not converging, then with enough solution data, CDA can enable Picard to converge.  The case of noisy data in CDA-Picard is studied in \cite{GLNR24}, and using CDA to improve the Newton iteration for NSE is considered in \cite{LHRV23}.  To improve efficiency of CDA-Picard, an incremental Picard Yosida (IPY) method was proposed for use with CDA in \cite{FRV25}, which provided similar robustness and acceleration as CDA-Picard but  by its construction CDA-IPY has an efficiency gain due to a more easily solvable Schur complement in the linear system solves at each iteration.

We note the CDA-Picard papers \cite{LHRV23,GLNR24} from 2024 were the first works that brought CDA ideas to bear for accelerating and enabling convergence of nonlinear solvers for steady problems (or at a time step of a transient problem).  CDA was originally developed by Azouani, Olson and Titi in 2014 to improve accuracy and stability of time dependent problems by nudging an interpolant of a computed solution toward a true solution interpolant at each time step \cite{AOT14,AT14}, leading to solutions that become more accurate as time evolves in a simulation.  These initial CDA papers of Azouani et al. led to an explosion of interest and success, and now hundreds of papers have been written to extend and improve the CDA approach to data assimilation for time dependent PDEs.  We refer readers interested in CDA to the recent papers \cite{FLMW24,JP23,LP24,ARR_2022,CFLS25,AJS_2022,CH22} and references therein.

While the acceleration and robustness improvements that CDA provides to the Picard nonlinear solver are outstanding, a fundamental drawback of CDA-Picard (and CDA-IPY) is that solving the saddle point linear systems that arise at each nonlinear iteration can be difficult and computationally expensive, especially as $Re$ increases.   The purpose of this paper is to study a significantly more efficient CDA-Uzawa nonlinear solver for solving the NSE when partial solution data is known.

Uzawa solvers are classical splitting schemes that require no Schur complement solves, and thus they are much cheaper at a single iteration compared to Picard and IPY due to an explicit splitting of the mass and momentum equations (the pressure in the momentum equation is lagged) \cite{CHS15,temam}.  However, it is also known that in general they do not converge nearly as fast as Picard in terms of total iterations, which makes the overall cost more expensive \cite{GRVZ23}.  Hence, their use as stand alone nonlinear solvers has long been abandoned, although some improvements in their analysis have recently been discovered \cite{CHS17,CHS15}.  It has recently been shown that by using grad-div stabilization and treating the viscous term implicitly, significant improvement in convergence can be obtained \cite{GRVZ23, TCEK23}.  Therefore, we study CDA applied to a grad-div stabilized Uzawa scheme, which takes the form (and which we refer to as CDA-Uzawa):
\begin{align}
	-\nu \Delta u_{k+1}+u_{k}\cdot\nabla u_{k+1}+ \nabla p_{k} - \gamma\nabla (\nabla \cdot u_{k+1}) + \mu I_H(u_{k+1} - u)&={f}, \label{cdau11} \\
		\gamma \nabla\cdot {u}_{k+1} + (p_{k+1} - p_k) &=0. \label{cdau12}
\end{align}
Note that \eqref{cdau11} can be solved independently, and then \eqref{cdau12} is used to recover $p_{k+1}$.  Further, if $\nabla \cdot u_{k+1}$ is in the pressure space (e.g. if Scott-Vogelius elements are used), then recovering $p_{k+1}$ is simply a calculation.  Hence at each iteration of CDA-Uzawa, the main cost is a single linear solve for \eqref{cdau11}; for comparison, CDA-Picard must solve a non-symmetric saddle point linear system that when solved with robust methods such as augmented-Lagrangian preconditioning approaches of \cite{benzi,HR13,BB12} requires 8-20 such linear solves (
2 linear solves very similar to \eqref{cdau11} at each outer iteration of GMRES, and each outer GMRES iteration typically requires 4-10 (or more) iterations to converge).  

%,  this means {\it CDA-Picard requires 8-20 linear solves of the form \eqref{cdau11} at each nonlinear iteration compared to just 1 required for CDA-Uzawa.} 

We show herein analytically and numerically that in a discrete setting where the divergence of the velocity space is a subspace of the pressure space (e.g. in the Scott-Vogelius element setting \cite{GS19,scott:vogelius:conforming}) and provided enough partial solution data, CDA-Uzawa 
has essentially the same convergence rate and convergence basin as CDA-Picard and CDA-IPY ({\it in other words, CDA makes it so there is no cost in convergence for the gain in efficiency at each iteration!}).  We will show CDA-Uzawa solver's associated Lipschitz constant is $\sim H^{1/2}\alpha$, just like it is for CDA-Picard and CDA-IPY.  
%However, at each iteration, both CDA-Picard and CDA-IPY require two solves that are analogous to the single solve needed by CDA-Uzawa in \eqref{cdau11}, and worse, CDA-Picard has a nonsymmetric Schur complement to solve while CDA-IPY has a Stokes Schur complement to solve.  
This is quite surprising in our view, since Uzawa is generally a significantly worse solver than Picard in terms of convergence rate and robustness with respect to the problem parameters $Re$ and $\alpha$, but CDA essentially fixes these problems with Uzawa.
%, and yet CDA-Uzawa has the same Lipschitz constant as CDA-Picard and thus yields about the same rate and about the same robustness.  
Often in scientific computing there is a trade-off of efficiency versus accuracy/effectiveness, but in this case CDA-Uzawa gains significantly in efficiency over CDA-Picard but maintains the same convergence properties.

This paper is arranged as follows. Section 2 provides the mathematical preliminaries needed for a smooth analysis to follow. In Section 3, we detail the convergence theory of CDA-Uzawa in both the noise-free data case and the noisy partial solution data setting. Section 4 gives the numerical results to illustrate the CDA-Uzawa theory. In Section 5, we draw conclusions and present future directions.

\section{Mathematical Preliminaries}
\indent We consider $\Omega \subset \mathbb{R}^d$, $d = 2, 3$, to be a regular domain, and denote the $L^2(\Omega)$ norm by $|| \cdot ||$ and inner product by $(\cdot, \cdot)$. All other norms will be labeled with subscripts.  Define the natural NSE velocity and pressure spaces to be
\begin{align*}
    X & := H_0^1(\Omega) = \{ v\in H^1(\Omega),\ v|_{\partial\Omega}=0 \}, \\
    Q & := L^2_0(\Omega) = \{ q\in L^2(\Omega),\ \int_{\Omega} q=0 \}.
\end{align*}
Define the divergence-free velocity space by
\begin{equation*}
    V := \{v \in X \ : \ (\nabla \cdot v, q) = 0, \ \forall q \in Q \}.
\end{equation*}

Recall that the Poincar\' e-Friedrichs inequality holds in $X$: there exists a constant $C_P > 0$ depending only on the size of $\Omega$ such that for every $v \in X$,
\begin{equation}
    \label{prelimeq7} ||v|| \leq C_P||\nabla v||.
\end{equation}
The dual space of $X$ will be denoted as $X'$, and the associated norm of $X'$ will be denoted as $||\cdot ||_{-1}$.  The notation $\langle \cdot, \cdot \rangle$ is used for the dual pairing of $X$ and $X'$.

The following bound is important in our analysis and is well known from e.g. \cite{Laytonbook}: there exists a constant $M$ that is only dependent on the diameter of $\Omega$ such that for any $\chi\in V$ and $v,w\in X$,
\begin{align}
    \label{bbound} (\chi\cdot\nabla  w, v) &\leq M ||\chi||^{\frac{1}{2}} ||\nabla \chi ||^{\frac{1}{2}} ||\nabla w|| ||\nabla v||.
\end{align}

\subsection{NSE Preliminaries}

The weak form of the steady NSE is as follows: find $(u, p) \in (X, Q)$ satisfying for all $(v, q) \in (X, Q)$, 
\begin{align}
   \label{eq4} \nu (\nabla u, \nabla v) + b(u, u, v) - (p, \nabla \cdot v)  &= \langle f, v \rangle,\\
   \label{eq5} (\nabla \cdot u, q) &= 0.
\end{align}

The system \eqref{eq1}-\eqref{eq3} is known to admit solutions for any problem data satisfying $f\in H^{-1}(\Omega)$ and $\nu>0$  \cite{Laytonbook}, and any such solution is bounded by
\begin{equation}
    \label{prelimeq3} ||\nabla u|| \leq \nu^{-1} ||f||_{-1}.
\end{equation}
Additionally, if the (sufficient) condition holds on the size of the problem parameters
\[
\alpha := M \nu^{-2} ||f||_{-1}<1,
\]
then the solutions are also unique \cite{Laytonbook,temam}.  For sufficiently large $\alpha$, it is known that multiple solutions to \eqref{eq4}-\eqref{eq5} may exist \cite{Laytonbook}.

\begin{remark}
Our analysis avoids smallness assumptions on $\alpha$, and thus on the Reynolds number as well.  In the case of $\alpha$ that is large enough for multiple solutions to exist, our analysis proves convergence of CDA-Uzawa to the solution from which the partial solution data $I_H(u)$ was obtained.
\end{remark}

\subsection{Finite Element Preliminaries}

Let $\tau_h$ be a regular, conforming triangulation of $\Omega$, with $h$ denoting the maximum element diameter.  Denote by $P_k$ the space of degree $k \geq 1$ piecewise continuous polynomials on $\tau_h$. $P_k^{disc}$ will denote the space of degree $k \geq 0$ piecewise discontinuous polynomials on $\tau_h$. 

We use discrete velocity-pressure spaces $(X_h, Q_h) \subset (X, Q)$ such that the LBB condition holds: there exists a constant $\beta$ that is independent of $h$ and satisfies
\begin{equation}
    \label{prelimeq6} \inf_{q \in Q_h} \sup_{v \in X_h} \frac{(\nabla \cdot v, q)}{||q||||\nabla v||} \geq \beta > 0.
\end{equation}
We also assume that $\nabla \cdot X_h \subseteq Q_h$, which is a critical assumption for the success of the CDA-Uzawa scheme.  Elements that satisfy these assumptions include  $(P_k,P_{k-1}^{disc})$ Scott-Vogelius elements on meshes with appropriate macro-element structures \cite{arnold:qin:scott:vogelius:2D,GS19,JLMNR17,scott:vogelius:conforming, zhang:scott:vogelius:3D}.

Because of our assumption that $\nabla \cdot X_h \subseteq Q_h$, there is no difference between our analysis done on $(X,Q)$ versus $(X_h,Q_h)$.  Hence, we will do our analysis simply on $(X,Q)$ to save on notation, and our numerical tests use Scott-Vogelius finite elements.

\subsection{Uzawa}

Uzawa schemes are classical efficient splitting schemes for Stokes and  NSE \cite{BGHRR25,CHS15,MS18,temam}, and there are many minor variations of them for NSE including e.g. different linearization of the convective term in the iteration \cite{CHS15}, stabilizing the convergence (which is sometimes renamed to Arrow-Hurwicz) \cite{CHS17, temam}, whether to treat the viscous term implicitly or explicitly, adding grad-div stabilization  \cite{GRVZ23, TCEK23}, and more.  The particular Uzawa form we consider comes from combining the linearization of convection from \cite{CHS15} and the grad-div stabilization from \cite{GRVZ23, TCEK23}, which yields the following algorithm.  
%One could call it a modified grad-div stabilized Uzawa, but we will refer to it simply as Uzawa.

\begin{alg}[Uzawa]
Let $\nu>0$, $f\in H^{-1}(\Omega)$, $u_0\in X$, and $p_0\in Q$ be given, and set the grad-div parameter $\gamma> 0$.  Then, the Uzawa iteration is defined by: given $(u_k,p_k)\in (X,Q)$, find $(u_{k+1},p_{k+1})\in (X,Q)$ satisfying for all $(v,q)\in (X,Q)$,
\begin{align}
 \nu(\nabla u_{k+1},\nabla v) + (u_k \cdot\nabla u_{k+1},v) - (p_k,\nabla \cdot v) + \gamma(\nabla \cdot u_{k+1},\nabla \cdot v) & = \langle f,v \rangle, \label{U1} \\
\gamma (\nabla \cdot u_{k+1},q) + (p_{k+1} - p_k,q)&=0. \label{U2}
\end{align}
\end{alg}

Because $\nabla \cdot X \subseteq Q$ (and in the discrete case, we assume $\nabla \cdot X_h\subseteq Q_h$), we can insert \eqref{U2} at step $k$ into step $k+1$ \eqref{U1} by taking $q=\nabla \cdot v$ to obtain
\[
 \nu(\nabla u_{k+1},\nabla v) + (u_k \cdot\nabla u_{k+1},v) - (p_{k-1},\nabla \cdot v) + \gamma(\nabla \cdot u_{k+1},\nabla \cdot v)  = \langle f,v \rangle + \gamma (\nabla \cdot u_k,\nabla \cdot v). 
 \]
 Repeating this pressure substitution for $k-1$, $k-2$,..., provides us with
\[
 \nu(\nabla u_{k+1},\nabla v) + (u_k \cdot\nabla u_{k+1},v) - (p_0,\nabla \cdot v) + \gamma(\nabla \cdot u_{k+1},\nabla \cdot v)  = \langle f,v \rangle + \gamma \sum_{j=1}^k (\nabla \cdot  u_j,\nabla \cdot v). 
 \]
 If the initial guess is $p_0=0$ and a divergence-free velocity, then we recover
\[
 \nu(\nabla u_{k+1},\nabla v) + (u_k \cdot\nabla u_{k+1},v) + \gamma(\nabla \cdot u_{k+1},\nabla \cdot v)  = \langle f,v \rangle + \gamma \sum_{j=0}^k (\nabla \cdot  u_j,\nabla \cdot v), 
 \]
 which is the iterated penalty method for the NSE from  \cite{C93,G89b}.  This is an interesting connection between classical methods.

\subsection{CDA Preliminaries}

We denote by $\tau_H(\Omega)$ a coarse mesh of $\Omega$ to represent partial solution data.  We require that the $\tau_H$ nodes are also nodes of $\tau_h$.  We denote $I_H$ to be an interpolant on $\Omega$  satisfying: there exists a constant $C_I$ independent of $H$ such that 
\begin{align}
    \label{prelimeq4} ||I_Hv - v|| &\leq C_IH||\nabla v|| \quad \forall v \in X \\
    \label{prelimeq5} ||I_Hv|| &\leq C_I ||v|| \quad \forall v \in X.
\end{align}
Examples of such $I_H$ are the Scott-Zhang interpolant \cite{BS08,SZ90} and the $L^2$ projection onto $P_0(\tau_H)$.  Our computations will use $I_H = P_0(\tau_H)$ but with a particular quadrature approximation on each coarse mesh element that leads to a simple CDA implementation called algebraic nudging \cite{RZ21}.

\subsection{CDA-Uzawa}

We have now provided sufficient notation and preliminaries to formally define the CDA-Uzawa scheme.

\begin{alg}[CDA-Uzawa]
Let $\nu>0$, $f\in H^{-1}(\Omega)$, and $I_H(u)$ be known with $u$ representing a corresponding NSE velocity solution.  Further,  assume $u_0\in X$ and $p_0\in Q$ are given, and set the grad-div parameter $\gamma> 0$ and the nudging parameter $\mu>0$.  Then, the CDA-Uzawa iteration is defined by: given $(u_k,p_k)\in (X,Q)$, find $(u_{k+1},p_{k+1})\in (X,Q)$ satisfying for all $(v,q)\in (X,Q)$,

\begin{align}
\mu(I_H (u_{k+1}-u),I_Hv) + \nu(\nabla u_{k+1},\nabla v) + (u_k \cdot\nabla u_{k+1},v) - (p_k,\nabla \cdot v) + \gamma(\nabla \cdot u_{k+1},\nabla \cdot v) & = \langle f,v \rangle, \label{CDAU1} \\
\gamma (\nabla \cdot u_{k+1},q) + (p_{k+1} - p_k,q)&=0. \label{CDAU2}
\end{align}
\end{alg}

Since the term $\mu(I_H u_{k+1},I_Hv)$ contributes a nonnegative and symmetric component to the linear system, the well-posedness of \eqref{CDAU1}-\eqref{CDAU2} at each iteration follows from Lax Milgram just as it does in the $\mu=0$ case \cite{C93}.

\section{Analysis of CDA-Uzawa}

In this section, we prove a convergence theorem for CDA-Uzawa to the NSE solution from which that partial solution data $I_H(u)$ was obtained.  This result quantifies how the inclusion of data using CDA improves the convergence properties of the proposed nonlinear solver.

We define the following constants, as the convergence proof gets somewhat technical.
\begin{align*}
C_0 & := 4\beta^{-2} \left( 2 (\mu H)^2 \nu^{-1} C_I^6 C_P^2   + 5 \nu   + 4 \nu\alpha^2 C_P \right), \\
C_1 & := 64\beta^{-2}M^2 C_P \nu^{-2}, \\
C_2 & := 5\beta^{-2} C_I  \left(  (  \mu^2 C_I^4 C_P^2 \lambda^{-1} + \nu + \nu\alpha^2 C_P)   5 \sqrt{2}  + \sqrt{2} \nu    \right) \alpha^2,\\
C_3 & := 5 \beta^{-2} (  \mu^2 C_I^4 C_P^2 \lambda^{-1} + \nu + \nu\alpha^2 C_P) C_0,\\
C_4 & :=5 \sqrt{2} C_I \alpha^2,\\
\lambda & := \max \{ \mu, \frac{\nu}{2 C_I^2 H^2} \}, \\
\kappa & := \max \{ H(\gamma^{-1} C_2+C_4),\gamma^{-1} (C_0+\gamma^{-1}C_3) \}.
\end{align*}

We will now prove that the Lipschitz constant associated with the CDA-Uzawa nonlinear solver is (bounded by) $\kappa$.  The convergence result of Theorem \ref{thm1} makes no explicit assumption on $\alpha$; generally, nonlinear solvers for NSE schemes require $\alpha<1$ or smaller to guarantee convergence \cite{BGHRR25}.  CDA allows for this restriction to be removed by choosing $H$ sufficiently small (i.e. by having enough partial solution data) and $\gamma$ sufficiently large, which leads to a contraction ratio of $\sim H^{1/2}\alpha$ (noting that $\alpha^2$ is a factor of both $C_2$ and $C_4$).  Thus, CDA accelerates convergence and moreover enables convergence when the problem parameters are too large for Uzawa to converge.  Our numerical tests show that CDA-Uzawa indeed converges for much higher Reynolds numbers than Uzawa.  In computations, large $\gamma$ leads to poor conditioning since the grad-div matrix is singular, and thus taking $\gamma$ too large can lead to issues with linear solver convergence and accuracy.  In our numerical tests,  $\gamma=1$ was generally sufficiently large, but in some cases $\gamma=10$ was better, suggesting the theorem's sufficient conditions may be improvable.

\begin{theorem}\label{thm1}
Let $(u,p)\in (X,Q)$ solve the NSE \eqref{eq4}-\eqref{eq5}, and suppose that $I_H(u)$ is known.  Let $(u_{k+1},p_{k+1})\in (X,Q)$ be the $k+1^{st}$ iterate of the CDA-Uzawa algorithm \eqref{CDAU1}-\eqref{CDAU2}.  Suppose $H$ is chosen small enough and $\gamma$ large enough so that $\kappa<1$, and 
$\mu \ge \frac{\nu}{2C_I^2H^2}$.  Then, CDA-Uzawa converges linearly to the NSE solution from which $I_H(u)$ is obtained:
\begin{multline}
\lambda \| u - u_{k+1} \|^2 + \frac{\nu}{4} \| \nabla (u - u_{k+1}) \|^2 + \gamma^{-1}\| p - p_{k+1} \|^2 
\\ 
\le \kappa \left(  \lambda \| u - u_{k} \|^2 + \frac{\nu}{4} \| \nabla (u - u_{k}) \|^2 + \gamma^{-1} \| p - p_{k} \|^2 \right).
\end{multline}
\end{theorem}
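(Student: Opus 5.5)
Set $e_k := u - u_k$ and $\varepsilon_k := p - p_k$. The plan is to derive an error equation, test it with the velocity error, and then close the estimate with a separate pressure bound. Since $(u,p)$ solves \eqref{eq4}-\eqref{eq5} and $\nabla\cdot u=0$, we may add $\mu(I_H(u-u),I_Hv)=0$ and $\gamma(\nabla\cdot u,\nabla\cdot v)=0$ to the NSE. Subtracting \eqref{CDAU1}-\eqref{CDAU2} then gives, for all $(v,q)$,
\begin{align*}
\mu(I_H e_{k+1},I_Hv)+\nu(\nabla e_{k+1},\nabla v)+(u_k\cdot\nabla e_{k+1},v)+(e_k\cdot\nabla u,v)-(\varepsilon_k,\nabla\cdot v)+\gamma(\nabla\cdot e_{k+1},\nabla\cdot v)&=0,\\
\gamma(\nabla\cdot e_{k+1},q)+(\varepsilon_{k+1}-\varepsilon_k,q)&=0.
\end{align*}
Here I used $u\cdot\nabla u-u_k\cdot\nabla u_{k+1}=u_k\cdot\nabla e_{k+1}+e_k\cdot\nabla u$.

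First I test with $v=e_{k+1}$. Because $u_k$ is not divergence free, the term $(u_k\cdot\nabla e_{k+1},e_{k+1})$ does not vanish. The remedy is to use the skew-symmetrized form (implicit in the discrete setting); otherwise I would write it as $-\tfrac12((\nabla\cdot u_k)e_{k+1},e_{k+1}) = \tfrac12((\nabla\cdot e_k)e_{k+1},e_{k+1})$ and absorb it using the grad-div term. The nudging term is handled in the standard CDA way, via \eqref{prelimeq4} and the assumption $\mu\ge \nu/(2C_I^2H^2)$:
\[
\mu\|I_He_{k+1}\|^2\ \ge\ \tfrac{\mu}{2}\|e_{k+1}\|^2-\mu C_I^2H^2\|\nabla e_{k+1}\|^2,
\]
and the second piece is absorbed into $\nu\|\nabla e_{k+1}\|^2$. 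This is where $\lambda$ and the factor $\nu/4$ come from.

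The key nonlinear term is $(e_k\cdot\nabla u,e_{k+1})$. Applying \eqref{bbound} with the roles arranged so that the $L^2$ factor lands on $e_{k+1}$ or $e_k$, and using \eqref{prelimeq3} ($\|\nabla u\|\le\nu^{-1}\|f\|_{-1}$, so that $M\|\nabla u\|=\nu\alpha$), gives a bound like $\nu\alpha\|e_k\|^{1/2}\|\nabla e_k\|^{1/2}\|\nabla e_{k+1}\|$. Young's inequality together with $\|e_k\|\le \|I_He_k\|+C_IH\|\nabla e_k\|$ then produces the $H\alpha^2$ factors that appear in $C_2$ and $C_4$.

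For the pressure term $(\varepsilon_k,\nabla\cdot e_{k+1})$, I test the second error equation with $q=\varepsilon_{k+1}+\varepsilon_k$ (equivalently $q=\gamma^{-1}$-weighted pressure errors) and use the polarization identity. This gives
\[
\gamma^{-1}\big(\|\varepsilon_{k+1}\|^2-\|\varepsilon_k\|^2\big)
\]
plus a cross term of the form $\gamma\|\nabla\cdot e_{k+1}\|^2$, which is controlled by the grad-div term.

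The Uzawa-type energy identity alone does not contract the pressure. So the second step is an LBB estimate via \eqref{prelimeq6}: bound $\|\varepsilon_k\|$ in terms of the velocity errors from the momentum error equation,
\[
\beta\|\varepsilon_k\|\le \sup_v \frac{(\varepsilon_k,\nabla\cdot v)}{\|\nabla v\|},
\]
with each remaining term bounded by $\mu C_I^2 C_P\|I_He_{k+1}\|$, $\nu\|\nabla e_{k+1}\|$, $\nu\alpha\sqrt{C_P}\,\|\nabla e_k\|$, the convection term via $M$, and $\gamma\|\nabla\cdot e_{k+1}\|$. This is the source of the $\beta^{-2}$ and $\mu^2C_I^4C_P^2\lambda^{-1}+\nu+\nu\alpha^2C_P$ structures in $C_0$ and $C_3$. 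Combining the LBB bound (multiplied by a small multiple of $\gamma^{-1}$) with the energy inequality converts the pressure term at step $k$ into velocity quantities with $\gamma^{-1}C_0$ and $\gamma^{-2}C_3$ weights.

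Collecting terms yields the stated inequality with constant $\max\{H(\gamma^{-1}C_2+C_4),\,\gamma^{-1}(C_0+\gamma^{-1}C_3)\}=\kappa$. The main obstacle is this pressure coupling. The LBB bound for $\varepsilon_k$ involves $\|\nabla e_{k+1}\|$ and $\gamma\|\nabla\cdot e_{k+1}\|$, so it must be arranged so that the $\gamma$ factors cancel to leave $\gamma^{-1}$-small contributions. The non-divergence-free $u_k$ in the convection term (which produces $C_1$) must also be handled. I would first try to balance the Young's inequality weights so that every step-$(k+1)$ term is absorbed into the left side, with $\lambda$ collecting the $L^2$ parts.
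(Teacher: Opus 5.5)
Your error equations, your handling of $(e_k\cdot\nabla u,e_{k+1})$, and your plan to close with an LBB estimate all match the paper. The pressure closure, however, fails as you set it up, and this is exactly where the paper's key idea lies.

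\textbf{Why your pressure step cannot give $\kappa$.} You apply LBB to $\varepsilon_k$ through the momentum error equation in its original form, so your bound contains $\gamma\|\nabla\cdot e_{k+1}\|$. Squaring and multiplying by $\theta\gamma^{-1}$ produces a term $C\theta\beta^{-2}\gamma\|\nabla\cdot e_{k+1}\|^2$. Absorbing it into the $\frac{\gamma}{2}\|\nabla\cdot e_{k+1}\|^2$ left over after your polarization identity forces $\theta=O(\beta^2)$. So only an $O(\beta^2)$ fraction of $\frac{1}{2\gamma}\|\varepsilon_k\|^2$ can be removed from the right side. The best pressure contraction factor you can reach is then $1-c\beta^2$, independent of $\gamma$, and the entry $\gamma^{-1}(C_0+\gamma^{-1}C_3)$ of $\kappa$ cannot come out. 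Indeed, since $\gamma\nabla\cdot e_{k+1}=\varepsilon_k-\varepsilon_{k+1}$, your bound is just an LBB bound on $\varepsilon_{k+1}$ plus the increment, and the increment is controlled only with an $O(1)$ constant.

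\textbf{The missing idea.} Use $\varepsilon_k-\gamma\nabla\cdot e_{k+1}=\varepsilon_{k+1}$ (valid because $\nabla\cdot X\subseteq Q$) \emph{inside} the momentum error equation. This turns it into an equation with pressure $\varepsilon_{k+1}$ and no grad-div term. Testing with $v=e_{k+1}$ and keeping the cross term in product form gives $(\varepsilon_{k+1},\nabla\cdot e_{k+1})=\gamma^{-1}(\varepsilon_{k+1},\varepsilon_k-\varepsilon_{k+1})\le\gamma^{-1}\|\varepsilon_{k+1}\|\|\varepsilon_k\|$. LBB applied to the rewritten equation bounds $\beta\|\varepsilon_{k+1}\|$ by velocity errors at steps $k$ and $k+1$ alone, with no $\gamma$. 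Set $U_k:=\lambda\|e_k\|^2+\frac{\nu}{4}\|\nabla e_k\|^2$. Substituting the LBB bound into the product and using Young gives $U_{k+1}\le 5\sqrt{2}C_IH\alpha^2U_k+\gamma^{-2}C_0\|\varepsilon_k\|^2+\dots$. Squaring the LBB bound gives $\|\varepsilon_{k+1}\|^2\lesssim U_{k+1}+HU_k$ plus higher-order terms. These two recursions are what produce $\kappa$.

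\textbf{Smaller points.}
Your nudging bound absorbs $\mu C_I^2H^2\|\nabla e_{k+1}\|^2$ into $\nu\|\nabla e_{k+1}\|^2$. That needs $\mu\lesssim\nu/(C_I^2H^2)$, the opposite of the hypothesis. First lower $\mu$ to $\lambda=\min\{\mu,\nu/(2C_I^2H^2)\}$, as the paper's proof does.

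In the LBB bound, keep the term from $(e_k\cdot\nabla u,v)$ as $\nu\alpha\|e_k\|^{1/2}\|\nabla e_k\|^{1/2}$, not $\nu\alpha\sqrt{C_P}\|\nabla e_k\|$. Otherwise the factor $H$ multiplying $C_2$ is lost.

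You are right that $(u_k\cdot\nabla e_{k+1},e_{k+1})$ need not vanish for non-solenoidal $u_k$; the paper simply asserts that it does. Skew-symmetrization is the clean fix. Absorbing $\frac12((\nabla\cdot e_k)e_{k+1},e_{k+1})$ with the grad-div term does not work, since that term involves $\nabla\cdot e_k$ rather than $\nabla\cdot e_{k+1}$ and is cubic in the errors.

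Finally, $C_1$ comes from $(e_k\cdot\nabla e_{k+1},v)$ in the LBB estimate, not from the non-solenoidal $u_k$. The paper reaches the linear recursion with constant $\kappa$ only after discarding such higher-order products (e.g.\ $\gamma^{-2}C_1U_k\|\varepsilon_k\|^2$). Your ``collecting terms'' step would need that as well.
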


\begin{proof}
We begin by subtracting CDA-Uzawa \eqref{CDAU1}-\eqref{CDAU2} from the NSE \eqref{eq4}-\eqref{eq5} and setting $e_k:= u- u_k$ and $\delta_k = p - p_k$.   The error equations then become, for any $v\in X$ or $q\in Q$, 
\begin{align}
\mu(I_H e_{k+1},I_Hv) + \nu(\nabla e_{k+1},\nabla v) + (e_k \cdot\nabla u,v) + (u_k \cdot\nabla e_{k+1},v) - (\delta_k,\nabla \cdot v) + \gamma(\nabla \cdot e_{k+1},\nabla \cdot v) & = 0, \label{err1a} \\
\gamma (\nabla \cdot e_{k+1},q) - (p_{k+1} - p_k,q)&=0. \label{err1b}
\end{align}
Since $\nabla \cdot X\subseteq Q$, equation \eqref{err1b} implies that $\gamma \nabla \cdot e_{k+1} \equiv p_{k+1} - p_k \equiv \delta_{k}-\delta_{k+1}$ and thus that 
\[
(\delta_k - \gamma \nabla \cdot e_{k+1},\nabla \cdot v)=(\delta_{k+1}, \nabla \cdot v).
\]
Using this and taking $v=e_{k+1}$ in \eqref{err1a} provides us with
\begin{equation}
\mu \| I_H e_{k+1} \|^2 + \nu \| \nabla e_{k+1} \|^2 = -(e_k\cdot \nabla u,e_{k+1}) + (\delta_{k+1},\nabla \cdot e_{k+1}), \label{err2}
\end{equation}
since $(u_k \cdot \nabla e_{k+1},e_{k+1})=0$.  

Recall that it is proven in (\cite{C93}, equation (3.12)) that 
\begin{equation}
(\delta_{k+1},p_{k+1}-p_k) \le (\delta_{k+1},\delta_k). \label{codina}
\end{equation}
Next, we bound the first right hand side term of \eqref{err2} using \eqref{bbound} along with \eqref{prelimeq3} and the definition of $\alpha$, and we use \eqref{codina} and Cauchy-Schwarz on the second term to get the bound
\begin{align}
\mu \| I_H e_{k+1} \|^2 + \nu \| \nabla e_{k+1} \|^2 
& \le M \| e_k \|^{1/2} \| \nabla e_k \|^{1/2} \| \nabla u \| \| \nabla e_{k+1} \| +    ( \delta_{k+1},\nabla \cdot e_{k+1})  \nonumber \\
& \le \nu \alpha \| e_k \|^{1/2} \| \nabla e_k \|^{1/2}  \| \nabla e_{k+1} \| +  \gamma^{-1}  ( \delta_{k+1},p_{k+1}-p_k)  \nonumber \\
& \le M \| e_k \|^{1/2} \| \nabla e_k \|^{1/2} \| \nabla u \| \| \nabla e_{k+1} \| + \gamma^{-1} \| \delta_{k+1} \| \| \delta_{k} \| \nonumber \\
& \le \frac{\nu}{4} \| \nabla e_{k+1} \|^2 + \nu\alpha^2 \| e_k \| \| \nabla e_k \| + \gamma^{-1} \| \delta_{k+1} \| \| \delta_{k} \|, \label{err3}
\end{align}
with the last step coming from Young's inequality.

We lower bound the left hand side following \cite{GN20} using \eqref{prelimeq4} and the triangle inequality to get that
\[
\lambda \| e_{k+1} \|^2 + \frac{\nu}{2} \| \nabla e_{k+1} \|^2 \le \mu \| I_H e_{k+1} \|^2 + \nu \| \nabla e_{k+1} \|^2,
\]
where $\lambda =  \min\{ \mu,\frac{\nu}{2C_I^2 H^2} \}$.  Combining this with \eqref{err3} and then bounding the second right hand side term in \eqref{err3} with Young's inequality, we obtain
\begin{align}
\lambda \| e_{k+1} \|^2 + \frac{\nu}{4} \| \nabla e_{k+1} \|^2 & \le \nu\alpha^2 \| e_k \| \| \nabla e_k \|\frac{(\sqrt{2}C_I H)^{1/2}}{(\sqrt{2}C_I H)^{1/2}}  + \gamma^{-1} \| \delta_{k+1} \| \| \delta_{k} \| \nonumber \\
& \le 
\alpha^2 \left( \frac{\nu}{4} \sqrt{2}C_I H \| \nabla e_k \|^2 + \frac{\nu}{\sqrt{2}C_I H} \| e_k \|^2 \right)
+ \gamma^{-1} \| \delta_{k+1} \| \| \delta_{k} \| \nonumber \\
& = 
\sqrt{2}C_I H \alpha^2 \left(  \frac{\nu}{4}\| \nabla e_k \|^2 + \lambda \| e_k \|^2 \right)+ \gamma^{-1} \| \delta_{k+1} \| \| \delta_{k} \|, \label{err4}
\end{align}
with the last step assuming that $\mu\ge \frac{\nu}{2C_I^2H^2}$.\\
\indent We now desire to obtain a bound on $\delta_{k+1}$. We do this first by adding and subtracting $u$ to $u_k$ in $(u_k \cdot \nabla e_{k+1}, v)$ in \eqref{err1a}. Then, using the inf-sup condition, we get from \eqref{err1a}-\eqref{err1b} that
\begin{multline}
\beta \| \delta_{k+1} \| \le \mu C_I^2 C_P \| e_{k+1} \| + \nu \| \nabla e_{k+1} \|  \\ + \nu\alpha \| e_k \|^{1/2} \| \nabla e_k \|^{1/2} + M C_P^{1/2} \| \nabla e_k \| \| \nabla e_{k+1} \| + \nu\alpha C_P^{1/2} \| \nabla e_{k+1} \|, \label{err4b}
\end{multline}
thanks to Poincar\'e, \eqref{prelimeq5}, Cauchy-Schwarz, \eqref{bbound}, and \eqref{prelimeq3}.  Using this upper bound in \eqref{err4} provides the estimate
\begin{multline}
\lambda \| e_{k+1} \|^2 + \frac{\nu}{4} \| \nabla e_{k+1} \|^2
\le \sqrt{2}C_I H \alpha^2 \left(  \frac{\nu}{4}\| \nabla e_k \|^2 + \lambda \| e_k \|^2 \right) 
+ \gamma^{-1}\beta^{-1} \mu C_I^2 C_P \| \delta_{k} \|  \| e_{k+1} \| \\
+ \gamma^{-1}\beta^{-1}  \nu  \| \delta_{k} \| \| \nabla e_{k+1} \|
+ \gamma^{-1}\beta^{-1}  \nu\alpha  \| \delta_{k} \| \| e_k \|^{1/2} \| \nabla e_k \|^{1/2}
+ \gamma^{-1}\beta^{-1} M C_P^{1/2}  \| \delta_{k} \| \| \nabla e_k \| \| \nabla e_{k+1} \| \\
+ \gamma^{-1}\beta^{-1}\nu\alpha C_P^{1/2}  \| \delta_{k} \|\| \nabla e_{k+1} \|.\label{err5}
\end{multline}
We now bound each term in \eqref{err5} with a $\delta_k$.  For the first two and the last two terms, we use Young's inequality and obtain
\begin{align*}
 \gamma^{-1}\beta^{-1} \mu C_I^2 C_P \| \delta_{k} \|  \| e_{k+1} \| & \le  \gamma^{-2}\beta^{-2}\lambda^{-1} \mu^2 C_I^4 C_P^2 \| \delta_{k} \|^2 + \frac{\lambda}{4} \| e_{k+1} \|^2, \\
 \gamma^{-1}\beta^{-1}  \nu  \| \delta_{k} \| \| \nabla e_{k+1} \| & \le 4\gamma^{-2}\beta^{-2}  \nu  \| \delta_{k} \|^2 + \frac{\nu}{16} \| \nabla e_{k+1} \|^2, \\
  \gamma^{-1}\beta^{-1} M C_P^{1/2}  \| \delta_{k} \| \| \nabla e_k \| \| \nabla e_{k+1} \|
 & \le  4\nu^{-1} \gamma^{-2}\beta^{-2} M^2 C_P  \| \delta_{k} \|^2 \| \nabla e_k \|^2 + \frac{\nu}{16} \| \nabla e_{k+1} \|^2, \\
  \gamma^{-1}\beta^{-1}\nu\alpha C_P^{1/2}  \| \delta_{k} \|\| \nabla e_{k+1} \| & \le  4\gamma^{-2}\beta^{-2}\nu\alpha^2 C_P  \| \delta_{k} \|^2 + \frac{\nu}{16} \| \nabla e_{k+1} \|^2. 
\end{align*}
For the third term in \eqref{err5} with a $\delta_k$, we apply Young's inequality twice with the second time following \eqref{err4} to get the bound
\begin{align*}
\gamma^{-1}\beta^{-1}  \nu\alpha  \| \delta_{k} \| \| e_k \|^{1/2} \| \nabla e_k \|^{1/2}
& \le
\gamma^{-2 }\beta^{-2}  \nu \| \delta_{k} \|^2 +  \frac14 \nu\alpha^2 \| e_k \| \| \nabla e_k \| \\
& \le
\gamma^{-2 }\beta^{-2}  \nu  \| \delta_{k} \|^2 +  \frac14 \sqrt{2} C_I H\alpha^2 \left( \frac{\nu}{4} \| \nabla e_k \|^2 + \lambda \| e_k \|^2 \right).
\end{align*}

Combining these bounds, using the assumption on $\mu$ which implies $\lambda=\frac{\nu}{2C_I^2 H^2}$ with \eqref{err5}, and reducing yields
\begin{multline}
\lambda \| e_{k+1} \|^2 + \frac{\nu}{4} \| \nabla e_{k+1} \|^2
\le 
  5 \sqrt{2} C_I H\alpha^2 \left( \frac{\nu}{4} \| \nabla e_k \|^2 + \lambda \| e_k \|^2 \right) \\
 + 4\gamma^{-2}\beta^{-2} \left( 2 (\mu H)^2 \nu^{-1} C_I^6 C_P^2   + 5 \nu   + 4 \nu\alpha^2 C_P \right)
\| \delta_{k} \|^2 + 16 \gamma^{-2}\beta^{-2} M^2 C_P \nu^{-1} \| \nabla e_k \|^2 \| \delta_{k} \|^2 .
\label{err6}
\end{multline}
Next, for notational simplicity, we denote
\begin{align*}
U_{k} &:= \lambda \| e_k \|^2 + \frac{\nu}{4} \| \nabla e_{k} \|^2,\\
 P_k &:= \| \delta_k \|^2.
\end{align*}
The estimate \eqref{err6} can now be written as
\begin{equation}
U_{k+1} \le 5 \sqrt{2} C_I H\alpha^2 U_k + \gamma^{-2} C_0 P_k + C_1 \gamma^{-2} U_k P_k. \label{err7}
\end{equation}
Next, squaring both sides of \eqref{err4b} and reducing provides
\begin{equation*}
P_{k+1} \le 5\beta^{-2} \bigg(  \mu^2 C_I^4 C_P^2 \lambda^{-1} U_{k+1}  + \nu U_{k+1}    + \nu^2\alpha^2 \| e_k \| \|\nabla e_k \| + M^2 C_P \nu^{-2} U_{k} U_{k+1}  + \nu\alpha^2 C_P U_{k+1} \bigg), \label{err8}
\end{equation*}
and further reduction with Young's inequality on the higher order term leads to
\begin{multline}
P_{k+1} \le 5\beta^{-2} (  \mu^2 C_I^4 C_P^2 \lambda^{-1} + \nu + \nu\alpha^2 C_P) U_{k+1}   \\  + 5\beta^{-2}   \nu^{3/2}\alpha^2 \lambda^{-1/2} U_k + \frac52 \beta^{-2}   M^2 C_P \nu^{-2} U_{k+1}^2 + \frac52 \beta^{-2}   M^2 C_P \nu^{-2} U_{k}^2. \label{err91}
\end{multline}
Now, using \eqref{err7} in \eqref{err91} and substituting in for $\lambda^{-1/2}$ gives us
\begin{multline}
P_{k+1} \le 
5\beta^{-2} C_I \alpha^2 H \left(  (  \mu^2 C_I^4 C_P^2 \lambda^{-1} + \nu + \nu\alpha^2 C_P)   5 \sqrt{2}  + \sqrt{2} \nu    \right) U_k \\
+ 5 \gamma^{-2} \beta^{-2} (  \mu^2 C_I^4 C_P^2 \lambda^{-1} + \nu + \nu\alpha^2 C_P) C_0 P_k 
+ 5\gamma^{-2} \beta^{-2} (  \mu^2 C_I^4 C_P^2 \lambda^{-1} + \nu + \nu\alpha^2 C_P) C_1 U_k P_k \\
+ \frac52 \beta^{-2}   M^2 C_P \nu^{-2} \left( 5 \sqrt{2} C_I H\alpha^2 U_k + \gamma^{-2} C_0 P_k + C_1 \gamma^{-2} U_k P_k\right)^2 + \frac52 \beta^{-2}   M^2 C_P \nu^{-2} U_{k}^2. \label{err9}
\end{multline}
Thus, after dropping higher order terms, we obtain
\begin{align*}
U_{k+1} & \le  H C_4 U_k + \gamma^{-2} C_0 P_k, \\
P_{k+1} & \le 
H C_2 U_k 
+  \gamma^{-2} C_3 P_k. 
\end{align*}
Adding the inequalities after scaling the second one by $\gamma^{-1}$ yields
\begin{align*}
U_{k+1} + \gamma^{-1} P_{k+1} & \le H(\gamma^{-1}C_2+C_4)U_k + \gamma^{-2} (C_0+\gamma^{-1}C_3)P_k \\
& = H(\gamma^{-1}C_2+C_4)U_k + \gamma^{-1} (C_0+\gamma^{-1}C_3) \gamma^{-1} P_k \\
& \le \max \{ H(\gamma^{-1} C_2+C_4),\gamma^{-1} (C_0+\gamma^{-1}C_3) \} \left( U_k + \gamma^{-1} P_k\right) \\
& = \kappa (U_k + \gamma^{-1} P_k), 
\end{align*}
which implies a contraction under the assumption that $\kappa<1$.

\end{proof}

\subsection{The case of noisy data}

We now consider convergence of CDA-Uzawa when the partial solution data is noisy.  We observe similar linear convergence as in the case of no noise above but only down to the size of the noise. We again define the following constants for notational simplicity.  Note that these constants change only slightly from those used (without tildes) above in Theorem \ref{thm1}. 
\begin{align*}
    \tilde{C_0} & := 4\beta^{-2} \left( 2 (\mu H)^2 \nu^{-1} C_I^6 C_P^2   + 5 \nu   + 4 \nu\alpha^2 C_P + \mu^2 C_I^4 C_P^2 \right), \\
 \tilde{C_1} & := 64 \beta^{-2}M^2 C_P \nu^{-2},\\
 \tilde{C_2} & := 6\beta^{-2} C_I  \left(  (  \mu^2 C_I^4 C_P^2 \hat{\lambda}^{-1} + \nu + \nu\alpha^2 C_P)   5 \sqrt{2}  + \nu    \right) \alpha^2,\\
\tilde{C_3} & := 6 \beta^{-2} (  \mu^2 C_I^4 C_P^2 \hat{\lambda}^{-1} + \nu + \nu\alpha^2 C_P) C_0,\\
\tilde{C_4} & :=5 \sqrt{2} C_I \alpha^2, \\
\tilde{C_5} & := 6\beta^{-2}\mu (  3\mu^2 C_I^4 C_P^2 \hat{\lambda}^{-1} + 3\nu + 3\nu\alpha^2 C_P + \mu C_I^2C_P^2), \\
\tilde \kappa & := \max \{ H(\gamma^{-1} \tilde{C_2}+\tilde{C_4}),\gamma^{-1} (\tilde{C_0}+\gamma^{-1}\tilde{C_3}) \}, \\
\hat \lambda & :=  \min\{ \frac{\mu}{2},\frac{\nu}{2C_I^2 H^2} \}.
\end{align*}

%
%\begin{remark}
%    Note that these constants change only slightly from Theorem \ref{thm1} with the most notable change being that we now have $\hat{\lambda}$, which differs slightly from the original definition of $\lambda$. Despite this change, the dependency on the parameters for these constants is the same from Theorem \ref{thm1}.
%\end{remark}

\noindent We consider $\epsilon$ to be the noise in the data, and thus $\epsilon(x_i)$ i=1,.2,...,N so that $I_H \epsilon$ is known.  

\begin{theorem} \label{thm2}
Let $(u,p)\in (X,Q)$ solve the NSE \eqref{eq4}-\eqref{eq5} and suppose that $I_H(u+\epsilon)$ is known. Let $(u_{k+1},p_{k+1})\in (X,Q)$ be the $k+1^{st}$ iterate of the CDA-Uzawa algorithm \eqref{CDAU1}-\eqref{CDAU2}.  Suppose $H$ is chosen small enough and $\gamma$ is chosen large enough so that $\tilde \kappa<1$, and
$\mu \ge \frac{\nu}{C_I^2H^2}$.
Then, CDA-Uzawa error satisfies 
\begin{multline*}
    \hat{\lambda} \| u - u_{k+1} \|^2 + \frac{\nu}{4} \| \nabla (u - u_{k+1}) \|^2 + \gamma^{-1}\| p - p_{k+1} \|^2 \\
\le \tilde \kappa \left(  \hat{\lambda} \| u - u_{k} \|^2 + \frac{\nu}{4} \| \nabla (u - u_{k}) \|^2  + \gamma^{-1} \| p - p_{k} \|^2 \right) + \left( 1 + \tilde{C_5} \gamma^{-1} \right) \| I_H \epsilon \|^2.
\end{multline*}

\end{theorem}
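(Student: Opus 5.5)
I will mirror the proof of Theorem \ref{thm1}, tracking the extra noise term. With noisy data the nudging term in \eqref{CDAU1} becomes $\mu(I_H(u_{k+1}-u-\epsilon),I_Hv)$. Subtracting from \eqref{eq4}--\eqref{eq5} and setting $e_k=u-u_k$, $\delta_k=p-p_k$ gives the error equations \eqref{err1a}--\eqref{err1b}, except that the right-hand side of \eqref{err1a} is now $-\mu(I_H\epsilon,I_Hv)$ instead of $0$ (the sign is immaterial). As before, $\nabla\cdot X\subseteq Q$ gives $\gamma\nabla\cdot e_{k+1}=\delta_k-\delta_{k+1}$, so the pressure-recombination step is unchanged. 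Testing with $v=e_{k+1}$ gives \eqref{err2} plus the term $|\mu(I_H\epsilon,I_He_{k+1})|$.

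I bound this term by Young's inequality,
\[
\mu|(I_H\epsilon,I_He_{k+1})|\le \tfrac{\mu}{2}\|I_He_{k+1}\|^2+\tfrac{\mu}{2}\|I_H\epsilon\|^2,
\]
and absorb half of the nudging term on the left. The remaining left side $\tfrac{\mu}{2}\|I_He_{k+1}\|^2+\nu\|\nabla e_{k+1}\|^2$ is then lower bounded, following \cite{GN20} and using \eqref{prelimeq4}, by $\hat\lambda\|e_{k+1}\|^2+\tfrac{\nu}{2}\|\nabla e_{k+1}\|^2$. This is exactly why $\hat\lambda=\min\{\mu/2,\nu/(2C_I^2H^2)\}$ appears, and why the hypothesis is strengthened to $\mu\ge\nu/(C_I^2H^2)$, which makes $\hat\lambda=\nu/(2C_I^2H^2)$. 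The convective and pressure terms are handled exactly as in \eqref{err3}--\eqref{err4} via \eqref{bbound}, \eqref{prelimeq3} and \eqref{codina}. The result is the analogue of \eqref{err4} with $\hat\lambda$ in place of $\lambda$ and an additive $\tfrac{\mu}{2}\|I_H\epsilon\|^2$. To obtain the clean coefficient $1$ in front of $\|I_H\epsilon\|^2$ in the $U$-estimate, I will split the Young constant, or rescale, so that the noise enters $U_{k+1}$ with coefficient at most $1$.

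For the pressure, the inf-sup argument \eqref{err4b} acquires the extra term $\mu C_IC_P\|I_H\epsilon\|$, since $|\mu(I_H\epsilon,I_Hv)|\le \mu C_I\|I_H\epsilon\|\,C_P\|\nabla v\|$. This feeds into the $\delta_k$-terms of \eqref{err5}. There, $\gamma^{-1}\beta^{-1}\mu C_IC_P\|I_H\epsilon\|\|\delta_k\|$ is split by Young into a $\gamma^{-2}\beta^{-2}\mu^2C_I^4C_P^2\|\delta_k\|^2$ piece and a noise piece. The $\|\delta_k\|^2$ piece is the new summand in $\tilde C_0$. Squaring the modified \eqref{err4b} now involves six terms instead of five, which is why the factor $5$ becomes $6$ in $\tilde C_2$ and $\tilde C_3$. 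It produces $P_{k+1}\le \dots+6\beta^{-2}\mu^2C_I^2C_P^2\|I_H\epsilon\|^2$. Inserting the noisy $U_{k+1}$ estimate into this, as in \eqref{err91}--\eqref{err9}, adds a further noise contribution $6\beta^{-2}(\mu^2C_I^4C_P^2\hat\lambda^{-1}+\nu+\nu\alpha^2C_P)\cdot\mu\|I_H\epsilon\|^2$ (up to constants). Together these make up $\tilde C_5\|I_H\epsilon\|^2$.

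Finally, I drop higher-order terms as in the proof of Theorem \ref{thm1}:
\[
U_{k+1}\le H\tilde C_4U_k+\gamma^{-2}\tilde C_0P_k+\|I_H\epsilon\|^2,\qquad
P_{k+1}\le H\tilde C_2U_k+\gamma^{-2}\tilde C_3P_k+\tilde C_5\|I_H\epsilon\|^2.
\]
Adding the first to $\gamma^{-1}$ times the second and taking the maximum of the coefficients gives the claim with $\tilde\kappa$.

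The main obstacle is the bookkeeping needed to make the constants come out exactly as $\tilde C_0,\dots,\tilde C_5$, and in particular to get coefficient $1$ (not $\mu/2$) on the noise in the $U$-estimate. I expect to handle this by choosing the Young weights appropriately, or by accepting $\mu$-dependent factors absorbed into $\tilde C_5$. The structural steps carry over directly from Theorem \ref{thm1}.
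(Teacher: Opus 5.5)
Your outline follows the paper's proof essentially step for step. It uses the same ingredients in the same order:
the error equation with right-hand side $-\mu(I_H\epsilon,I_Hv)$;
Young's inequality on $\mu(I_H\epsilon,I_He_{k+1})$ with half of the nudging term absorbed, which is where $\hat\lambda$ and the hypothesis $\mu\ge\nu/(C_I^2H^2)$ come from;
the extra $\mu C_IC_P\|I_H\epsilon\|$ in the inf-sup bound;
the six-term squaring that turns the $5$'s into $6$'s;
and $\tilde C_5$ assembled from the direct noise term in the $P$-estimate plus the $U_{k+1}$-noise fed into it.

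The step that would not go through as you propose is the one you flag yourself: coefficient $1$ on $\|I_H\epsilon\|^2$ in the $U$-estimate. No choice of Young weights removes the factor $\mu$. From $\mu\|I_H\epsilon\|\,\|I_He_{k+1}\|\le\theta\mu\|I_He_{k+1}\|^2+\frac{\mu}{4\theta}\|I_H\epsilon\|^2$, absorption needs $\theta<1$, so the noise coefficient is at least $\mu/4$. Routing through $\nu\|\nabla e_{k+1}\|^2$ instead produces $\mu^2C_I^2C_P^2\nu^{-1}$, which is worse under the hypothesis on $\mu$. Since $\mu\ge\nu/(C_I^2H^2)$ forces $\mu$ to be large as $H\to0$, this is a real obstruction. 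In fact a $\mu$-independent coefficient cannot hold in general: $U_k$ carries the physical dimensions of $\mu\|e_k\|^2$, while $\|I_H\epsilon\|^2$ carries those of $\|e_k\|^2$.

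Absorbing the leftover into $\tilde C_5$ is not available either. $\tilde C_5$ is a fixed constant that enters only through $\tilde C_5\gamma^{-1}$, so this would require $\tilde C_5\ge\gamma\mu/2$.

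The paper's proof does not resolve this point either. It passes from $3\mu\|I_H\epsilon\|^2$ in \eqref{err6N}--\eqref{err7N} to $\mu\|I_H\epsilon\|^2$ in the reduced $U$-estimate, and then to coefficient $1$ in the final sum, without justification. So what your argument, like the paper's, actually establishes is the estimate with $\left(c\mu+\tilde C_5\gamma^{-1}\right)\|I_H\epsilon\|^2$ for an absolute constant $c$ (the paper's displays carry $3\mu$). This reduces to the stated form only when $\mu$ is $O(1)$, e.g.\ $\mu=1$ as in the experiments.

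Separately, and again exactly as in the paper, discarding $\gamma^{-2}\tilde C_1U_kP_k$ and the quadratic terms presupposes that $U_k$ and $P_k$ are already small, so the contraction is only local. That limitation is inherited from Theorem~\ref{thm1}, not introduced by your adaptation.
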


\begin{remark}
We observe from the theorem that in the case of noisy partial solution data, the Lipschitz constant of the CDA-Uzawa solver is $\sim H^{1/2} \alpha$, just as in the case of no noise.  However, convergence is only down to a level dependent on the size of the noise.
\end{remark}

\begin{proof}
This proof follows similar steps as the proof of Theorem \ref{thm1}. The only difference is that the nudging term is now $\mu (I_H (u + \epsilon - u_{k+1}),I_H v)$ and is, therefore, handled differently.

We begin by subtracting \eqref{CDAU1}-\eqref{CDAU2} with the altered nudging term from \eqref{eq4}-\eqref{eq5} and setting $e_k:= u- u_k$ and $\delta_k = p - p_k$.   The error equations then become, for any $v\in X$ or $q\in Q$, 
\begin{align}
\mu(I_H e_{k+1},I_Hv) + \nu(\nabla e_{k+1},\nabla v) + (e_k \cdot\nabla u,v) + (u_k \cdot\nabla e_{k+1},v) & \nonumber \\ - (\delta_k,\nabla \cdot v) + \gamma(\nabla \cdot e_{k+1},\nabla \cdot v) & = -\mu (I_H \epsilon,I_H v), \label{err1aN} \\
\gamma (\nabla \cdot e_{k+1},q) - (p_{k+1} - p_k,q)&=0. \label{err1bN}
\end{align}
Following  as in the proof of Theorem \ref{thm1}  by using that $\gamma \nabla \cdot e_{k+1} \equiv \delta_{k}-\delta_{k+1}$ and choosing $v=e_{k+1}$ in \eqref{err1aN}, we obtain
\begin{equation}
\mu \| I_H e_{k+1} \|^2 + \nu \| \nabla e_{k+1} \|^2 = -(e_k\cdot \nabla u,e_{k+1}) + (\delta_{k+1},\nabla \cdot e_{k+1}) -\mu (I_H \epsilon,I_H e_{k+1}). \label{err2N}
\end{equation}
We bound the last term in \eqref{err2N} using Cauchy-Schwarz and Young's inequalities to get
\[
\mu (I_H \epsilon,I_H e_{k+1}) \le \frac{\mu}{2} \| I_H \epsilon \|^2 + \frac{\mu}{2} \| I_H e_{k+1} \|^2.
\]
Combining this with estimates \eqref{err3}-\eqref{err4} from the proof of Theorem \ref{thm1} for the rest of the terms in \eqref{err2N}, we get that
\begin{align}
\hat \lambda \| e_{k+1} \|^2 + \frac{\nu}{4} \| \nabla e_{k+1} \|^2 
%& \le \nu\alpha^2 \| e_k \| \| \nabla e_k \|\frac{(\sqrt{2}C_I H)^{1/2}}{(\sqrt{2}C_I H)^{1/2}}  + \gamma^{-1} \| \delta_{k+1} \| \| \delta_{k} \| \nonumber \\
& \le 
\sqrt{2}C_I H \alpha^2 \left(  \frac{\nu}{4}\| \nabla e_k \|^2 + \hat{\lambda} \| e_k \|^2 \right)+ \gamma^{-1} \| \delta_{k+1} \| \| \delta_{k} \| + \frac{\mu}{2} \| I_H \epsilon \|^2, \label{err4N}
\end{align}
under the assumption $\mu\ge \frac{\nu}{C_I^2H^2}$.

%where $\hat \lambda =  \min\{ \frac{\mu}{2},\frac{\nu}{2C_I^2 H^2} \}$ and assuming $\mu\ge \frac{\nu}{C_I^2H^2}$.

Using the inf-sup condition, we follow the analysis above in \eqref{err4b} to get from \eqref{err1aN}-\eqref{err1bN} to
\begin{multline}
\beta \| \delta_{k+1} \| \le \mu C_I^2 C_P \| e_{k+1} \| + \nu \| \nabla e_{k+1} \|  \\ + \nu\alpha \| e_k \|^{1/2} \| \nabla e_k \|^{1/2} + M C_P^{1/2} \| \nabla e_k \| \| \nabla e_{k+1} \| + \nu\alpha C_P^{1/2} \| \nabla e_{k+1} \| + \mu C_I C_P \| I_H \epsilon \|, \label{err4bN}
\end{multline}
noting that the only difference from \eqref{err4b} is the nudging term that arises from the right side of \eqref{err1aN}.  Using this estimate in \eqref{err4N} yields
\begin{multline}
\hat{\lambda} \| e_{k+1} \|^2 + \frac{\nu}{4} \| \nabla e_{k+1} \|^2
\le \sqrt{2}C_I H \alpha^2 \left(  \frac{\nu}{4}\| \nabla e_k \|^2 + \hat{\lambda} \| e_k \|^2 \right) 
+ \gamma^{-1}\beta^{-1} \mu C_I^2 C_P \| \delta_{k} \|  \| e_{k+1} \| \\
+ \gamma^{-1}\beta^{-1}  \nu  \| \delta_{k} \| \| \nabla e_{k+1} \|
+ \gamma^{-1}\beta^{-1}  \nu\alpha  \| \delta_{k} \| \| e_k \|^{1/2} \| \nabla e_k \|^{1/2}
+ \gamma^{-1}\beta^{-1} M C_P^{1/2}  \| \delta_{k} \| \| \nabla e_k \| \| \nabla e_{k+1} \| \\
+ \gamma^{-1}\beta^{-1}\nu\alpha C_P^{1/2}  \| \delta_{k} \|\| \nabla e_{k+1} \|
+ \gamma^{-1}\beta^{-1} \mu C_I C_P \| \delta_k \| \| I_H \epsilon \| 
+ \frac{\mu}{2}\| I_H \epsilon \|^2.\label{err5N}
\end{multline}
Each term in \eqref{err5N} on the right hand side gets bounded as in the proof of Theorem \ref{thm1} (with $\lambda$ replaced by $\hat{\lambda}$) except the penultimate term.  For this term, we get that
\[
\gamma^{-1}\beta^{-1} \mu C_I C_P \| \delta_k \| \| I_H \epsilon \|
\le 
\gamma^{-2}\beta^{-2} \mu C_I^2 C_P^2 \| \delta_k \|^2 +  \frac{\mu}{4} \| I_H \epsilon \|^2.
\]
Combining this bound with \eqref{err6}, which bounds the rest of the terms from \eqref{err5N}, provides the estimate 
\begin{multline}
\hat{\lambda} \| e_{k+1} \|^2 + \frac{\nu}{4} \| \nabla e_{k+1} \|^2
\le 
  5 \sqrt{2} C_I H\alpha^2 \left( \frac{\nu}{4} \| \nabla e_k \|^2 + \hat{\lambda} \| e_k \|^2 \right) +  3\mu \| I_H \epsilon \|^2 \\
 + 4\gamma^{-2}\beta^{-2} \left( 2 (\mu H)^2 \nu^{-1} C_I^6 C_P^2   + 5 \nu   + 4 \nu\alpha^2 C_P + \mu C_I^2 C_P^2 \right)
\| \delta_{k} \|^2 \\+ 16 \gamma^{-2} \beta^{-2} M^2 C_P \nu^{-1} \| \nabla e_k \|^2 \| \delta_{k} \|^2,
\label{err6N}
\end{multline}
noting that the assumption on $\mu$ implies $\hat{\lambda}=\frac{\nu}{C_I^2 H^2}$.\\
Next, for notational simplicity, we denote
\begin{align*}
U_{k} &:= \hat{\lambda} \| e_k \|^2 + \frac{\nu}{4} \| \nabla e_{k} \|^2,\\
 P_k &:= \| \delta_k \|^2.
\end{align*}
Note that $P_k$ is the same as in the proof of Theorem \ref{thm1}, and $U_k$ only differs with $\hat{\lambda}$.

The estimate \eqref{err6N} can now be written as
\begin{equation}
U_{k+1} \le 5 \sqrt{2} C_I H\alpha^2 U_k + \gamma^{-2} \tilde{C_0} P_k + \tilde{C_1} \gamma^{-2} U_k P_k + 3\mu \| I_H \epsilon \|^2. \label{err7N}
\end{equation}

Next, squaring both sides of \eqref{err4bN} and reducing provides
\begin{multline*}
P_{k+1} \le 6\beta^{-2} \bigg(  \mu^2 C_I^4 C_P^2 \hat{\lambda}^{-1} U_{k+1}  + \nu U_{k+1}    + \nu^2\alpha^2 \| e_k \| \|\nabla e_k \| + M^2 C_P \nu^{-2} U_{k} U_{k+1} \bigg. \\ \bigg. + \nu\alpha^2 C_P U_{k+1} 
+ \mu^2 C_I^2 C_P^2 \| I_H \epsilon \|^2  \bigg),
\end{multline*}
and further reduction with Young's inequality on the higher order term leads to
\begin{multline}
P_{k+1} \le 6\beta^{-2} (  \mu^2 C_I^4 C_P^2 \hat{\lambda}^{-1} + \nu + \nu\alpha^2 C_P) U_{k+1}   \\  + 6\beta^{-2}   \nu^{3/2}\alpha^2 \hat{\lambda}^{-1/2} U_k + 3 \beta^{-2}   M^2 C_P \nu^{-2} U_{k+1}^2\\ + 3 \beta^{-2}   M^2 C_P \nu^{-2} U_{k}^2  + 6\beta^{-2} \mu^2 C_I^2 C_P^2 \| I_H\epsilon \|^2. \label{err9N}
\end{multline}
Now, using \eqref{err7N} in \eqref{err9N}, substituting in for $\hat{\lambda}^{-1/2}$, and dropping higher order terms gives us
\begin{multline}
P_{k+1} \le 
6\beta^{-2} C_I \alpha^2 H \left(  (  \mu^2 C_I^4 C_P^2 \hat{\lambda}^{-1} + \nu + \nu\alpha^2 C_P)   5 \sqrt{2}  +  \nu    \right) U_k \\
+ 6 \gamma^{-2} \beta^{-2} (  \mu^2 C_I^4 C_P^2 \hat{\lambda}^{-1} + \nu + \nu\alpha^2 C_P) \tilde{C_0} P_k 
%+ 5\gamma^{-2} \beta^{-2} (  \mu^2 C_I^4 C_P^2 \lambda^{-1} + \nu + \nu\alpha^2 C_P) C_1 U_k P_k \\
%+ \frac52 \beta^{-2}   M^2 C_P \nu^{-2} \left( 5 \sqrt{2} C_I H\alpha^2 U_k + \gamma^{-2} C_0 P_k + C_1 \gamma^{-2} U_k P_k\right)^2 + \frac52 \beta^{-2}   M^2 C_P \nu^{-2} U_{k}^2. 
\\ + 6\beta^{-2}\mu (  3\mu^2 C_I^4 C_P^2 \hat{\lambda}^{-1} + 3\nu + 3\nu\alpha^2 C_P + \mu C_I^2C_P^2) \| I_H \epsilon \|^2.
\label{err9Na}
\end{multline}
This reduces to
\begin{align*}
U_{k+1} & \le  H \tilde{C_4} U_k + \gamma^{-2} \tilde{C_0} P_k + \mu \| I_H \epsilon \|^2, \\
P_{k+1} & \le 
H \tilde{C_2} U_k 
+  \gamma^{-2} \tilde{C_3} P_k  
+ \tilde{C_5} \| I_H \epsilon \|^2, 
\end{align*}
Adding the inequalities after scaling $P_k$ by $\gamma^{-1}$ yields
\begin{align*}
U_{k+1} + \gamma^{-1} P_{k+1} & \le H(\gamma^{-1}\tilde{C_2}+\tilde{C_4})U_k + \gamma^{-2} (\tilde{C_0}+\gamma^{-1}\tilde{C_3})P_k + \left( 1 + \tilde{C_5} \gamma^{-1} \right) \| I_H \epsilon \|^2 \\
& = H(\gamma^{-1}\tilde{C_2}+\tilde{C_4})U_k + \gamma^{-1} (\tilde{C_0}+\gamma^{-1}\tilde{C_3}) \gamma^{-1} P_k +  \left( 1 + \tilde{C_5} \gamma^{-1} \right) \| I_H \epsilon \|^2\\
& \le \max \{ H(\gamma^{-1} \tilde{C_2}+\tilde{C_4}),\gamma^{-1} (\tilde{C_0}+\gamma^{-1}\tilde{C_3}) \} \left( U_k + \gamma^{-1} P_k\right) +  \left( 1 + \tilde{C_5} \gamma^{-1} \right) \| I_H \epsilon \|^2\\
& \le \tilde \kappa (U_k + \gamma^{-1} P_k) + \left( 1 + \tilde{C_5} \gamma^{-1} \right) \| I_H \epsilon \|^2. 
\end{align*}
Since $\tilde \kappa<1$ by assumption, we obtain the bound
\[
U_{k+1} + \gamma^{-1} P_{k+1} \le \tilde \kappa^{k+1} \left( U_0 + \gamma^{-1} P_0 \right) + \frac{1-\tilde \kappa^{k+1}}{1-\tilde \kappa}  \left( 1 + \tilde{C_5} \gamma^{-1} \right) \| I_H \epsilon \|^2.
\]

\end{proof}

\section{Numerical Experiments}

We now give results for several numerical tests to illustrate our theory above and give comparisons to existing methods.  We use $(X_h,Q_h)=(P_k,P_{k-1}^{disc})$ Scott-Vogelius (SV) elements with $k=d$ in all tests except one where we show that $(P_2,P_1)$ Taylor-Hood (TH) elements are not appropriate for use with CDA-Uzawa; this is expected since the theory assumes $\nabla \cdot X_h \subseteq Q_h$, which holds for SV but not for TH.  All meshes are barycenter refined (Alfeld split) triangle meshes in 2D and barycenter refined tetrahedral meshes in 3D.  Note that these elements with these meshes are known to be inf-sup stable pairs \cite{arnold:qin:scott:vogelius:2D,GS19,JLMNR17,scott:vogelius:conforming, zhang:scott:vogelius:3D}.  

Convergence in our tests is measured in the norm
\[
\| (v,q) \|_{*} := \left( \| \nabla v \|^2 + \| q \|^2 \right)^{1/2}, 
\] 
which is the natural norm of the velocity and pressure spaces.  While the theory uses an additional $H$-dependent $L^2$ norm in the convergence estimate, using the $*$-norm allows for clearer comparison when varying $H$ and $\gamma$. To measure convergence and for data assimilation, the true solution is taken as a computed NSE solution found using the AA-Picard-Newton iteration \cite{PRTX25} and also continuation methods for some tests.  For the TH true solution, the NSE solution uses grad-div stabilization with the same $\gamma$ used by CDA-Uzawa.  The initial guesses are taken to be $u_0=0$ and $p_0=0$ for all tests, and $\mu=1$ in all tests.  We note that we also tested with $\mu=100$ and saw nearly indistinguishable results.

For linear solvers in CDA-Uzawa, we use a direct solver since CDA-Uzawa decouples the system, so the momentum equation is solved independently (and there is no pressure solve for Step 2 since SV elements are used; it is just a calculation).  However, after the nonlinear residual drops below $10^{-2}$ in the $*$-norm, we save the LUPQ factorization for CDA-Uzawa Step 1 every 5 iterations, and use it to precondition a GMRES solver for the following 4 nonlinear iterations.  When GMRES is used, convergence to $10^{-8}$ is found within 3-4 iterations on average.  For the test where CDA-Picard is used, we follow the strategy of \cite{HR13,benzi} by adding grad-div stabilization to the system (which has no effect on the solution since incompressibility is strongly enforced) and using an approximate block LU factorization as a preconditioner where the Schur complement is approximated by the pressure mass matrix.

Lastly, we note that none of the nonlinear solvers tested below were equipped with any additional stabilization or acceleration methods.  It is expected that all methods below could be further improved in terms of convergence rate and robustness if techniques such as Anderson acceleration \cite{PR25} were applied.

\subsection{Test problem descriptions}

We now describe the test problems to be used in the numerical experiments below.

\subsubsection{2D driven cavity}

\begin{figure}[ht]
\center
 $Re=5000$ \hspace{1in}  $Re=10000$ \\
\includegraphics[width = .3\textwidth, height=.28\textwidth,viewport=115 45 465 390, clip]{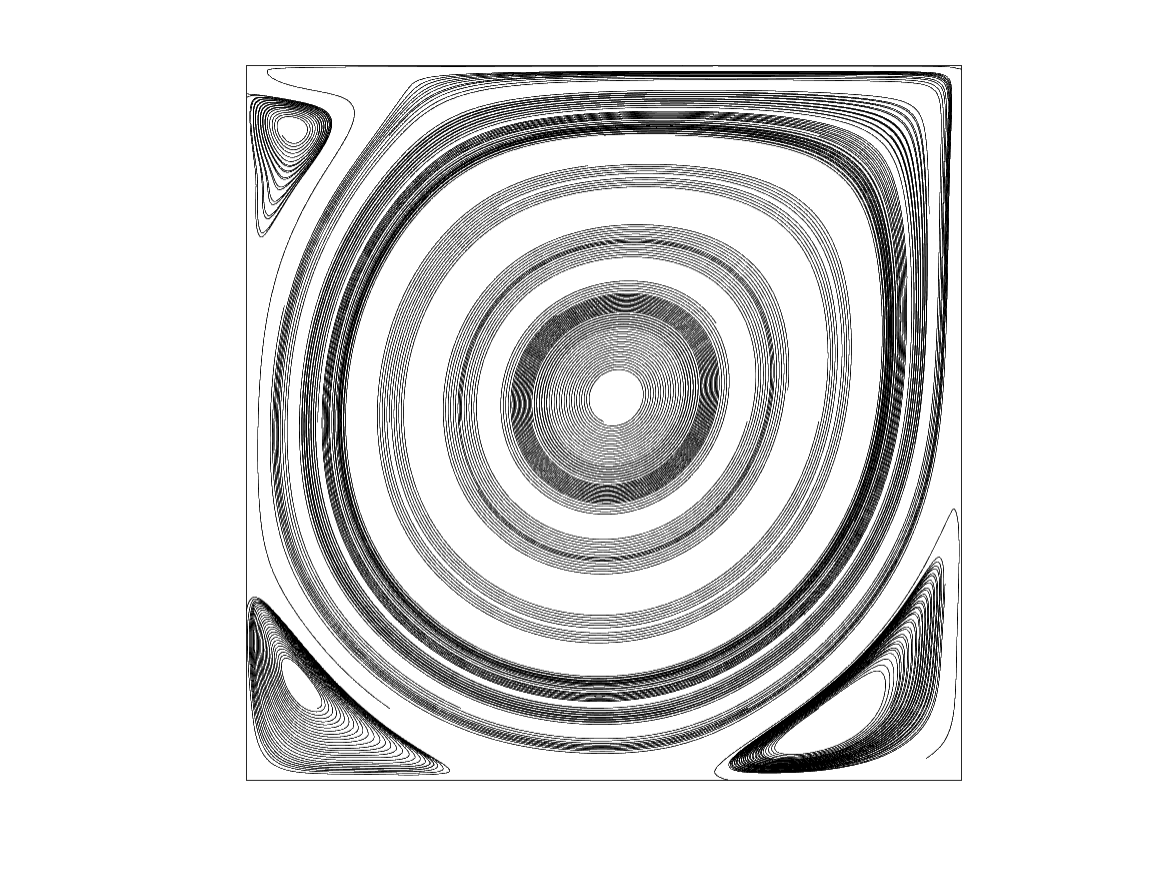}  
\includegraphics[width = .3\textwidth, height=.28\textwidth,viewport=115 45 465 390, clip]{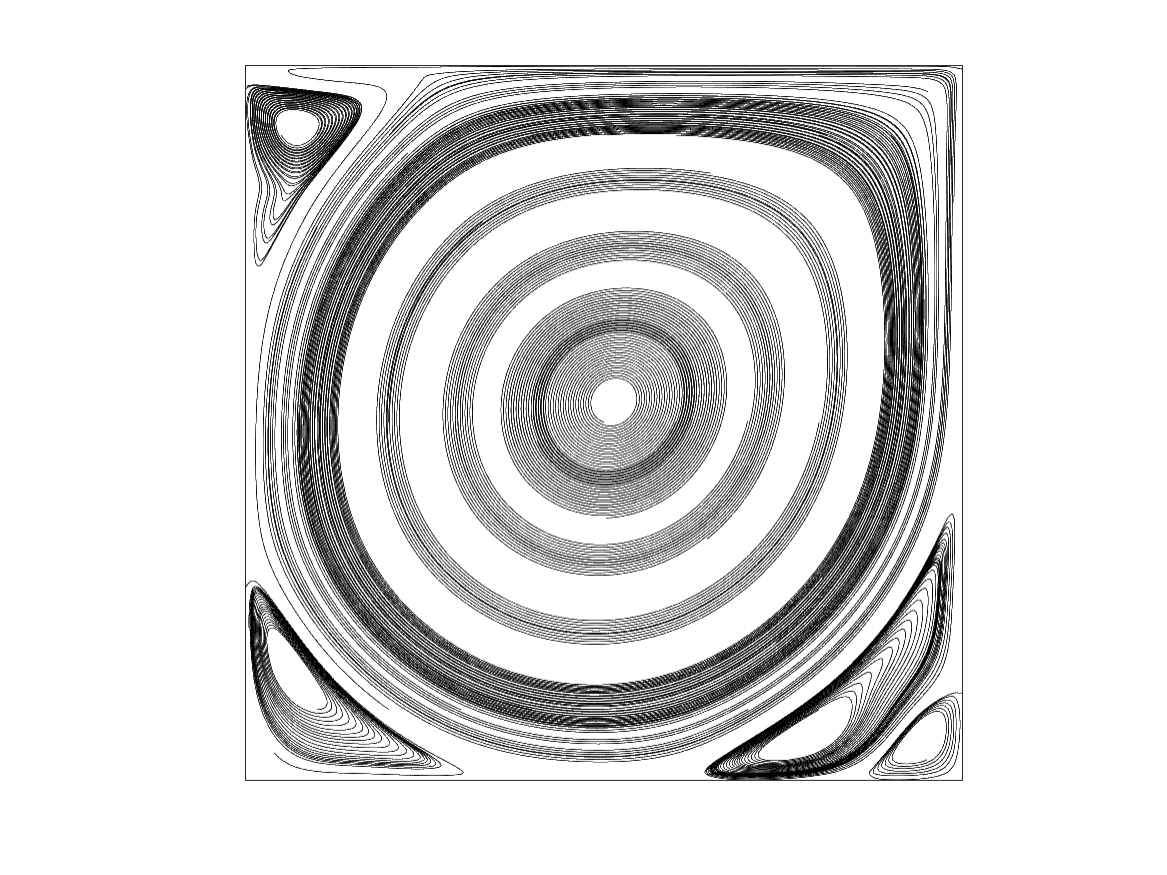}  
\caption{\label{dc2d} The plots above show streamlines of the solution of the 2D driven cavity problem solution at varying $Re$ taken from our finest mesh simulations.}
\end{figure}

The classical 2D driven cavity problem has domain $\Omega=(0,1)^2$, there is no forcing $f=0$, homogeneous Dirichlet velocity boundary conditions are enforced on the sides and bottom, and $[ 1,0 ]^T$ is enforced on the top (lid).  For this problem, $Re=\nu^{-1}$, and we compute with $Re$=5000 and 10000. Note that all of our solutions are in agreement with the literature \cite{ECG05}, see Figure \ref{dc2d}.  We compute on barycenter refined uniform $h=\frac{1}{64},\ \frac{1}{128},\ \frac{1}{196}$ meshes, which provide 99K, 394K, and 923K velocity degrees of freedom (dof) using $(P_2,P_1^{disc})$ SV elements, respectively.  Convergence results for the three meshes were always nearly identical, so results are only shown for the finest mesh.

\subsubsection{3D driven cavity}

\begin{figure}[H]
\center
$Re$=1000 \\
\includegraphics[width = .9\textwidth, height=.27\textwidth,viewport=120 0 1180 340, clip]{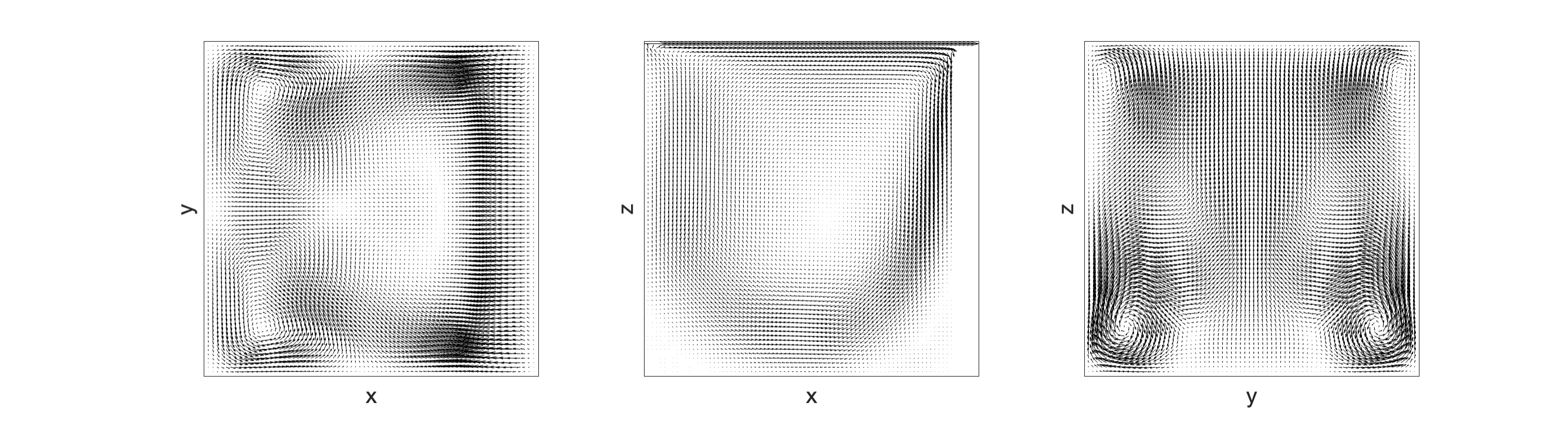}  \\
$Re$=2000 \\
\includegraphics[width = .9\textwidth, height=.27\textwidth,viewport=120 0 1180 340, clip]{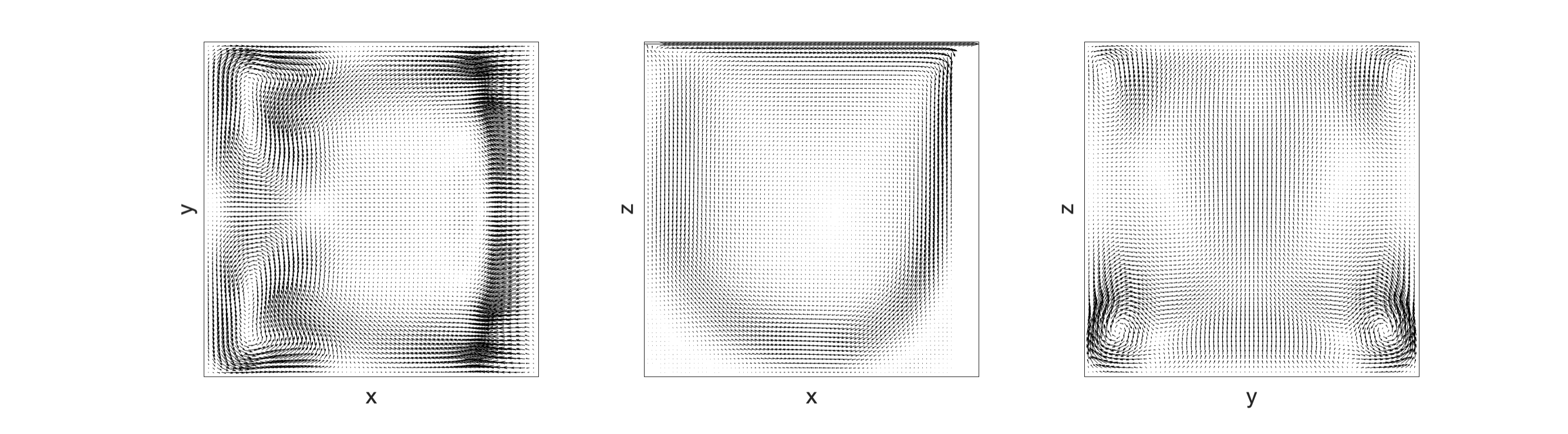}  
\caption{\label{cav3d} The plots above show $Re$=1000 (top) and $Re$=2000 (bottom) 3D driven cavity velocity solutions displayed as midspliceplanes of velocity.}
\end{figure}

We also test using the 3D driven cavity benchmark problem, which is analogous to the 2D case. Here, the domain is the unit cube, there is no external forcing so $f=0$, homogeneous Dirichlet boundary conditions for velocity are enforced on the walls, and $[1, 0, 0]^T$ is enforced at the top of the box. The mesh is constructed by starting with Chebychev points on $[0, 1]$ to create a $ \mathcal{M}^3$ grid of rectangular boxes. Each box is first
split into 6 tetrahedra, and then each of these tetrahedra is split into 4 additional tetrahedra by a barycenter refinement / Alfeld split. Computations were done with $\mathcal{M}=$11 and 13, which provide for 796K and 1.3M dof respectively.  Results on the two meshes were very similar, so only the finest mesh results are given below.  For this test, $Re=\nu^{-1}$, and for our tests we use $Re = 1000$ and $2000$. The midspliceplane velocity vector solutions for $Re = 1000$ and $2000$ are shown in Figure \ref{cav3d}.

\subsubsection{Flow in a stenotic artery}

\begin{figure}[H]
\center
\includegraphics[width = .8\textwidth, height=.23\textwidth,viewport=0 0 1200 300, clip]{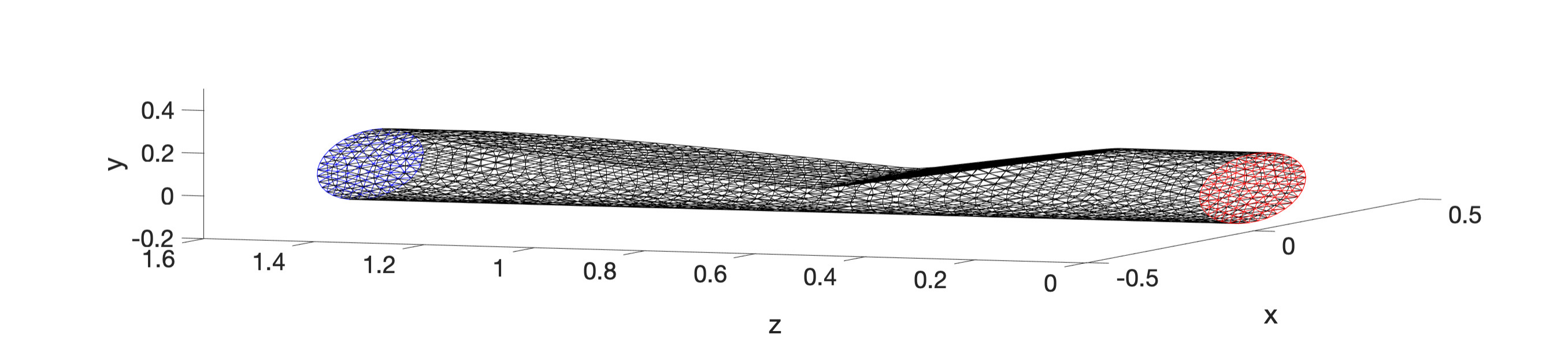}
\caption{\label{arterymesh} Shown above is the artery mesh (before the barycenter refinement is applied) restricted to the surface. }
\end{figure}

\begin{figure}[H]
\center
\includegraphics[width = .8\textwidth, height=.23\textwidth,viewport=50 0 1200 300, clip]{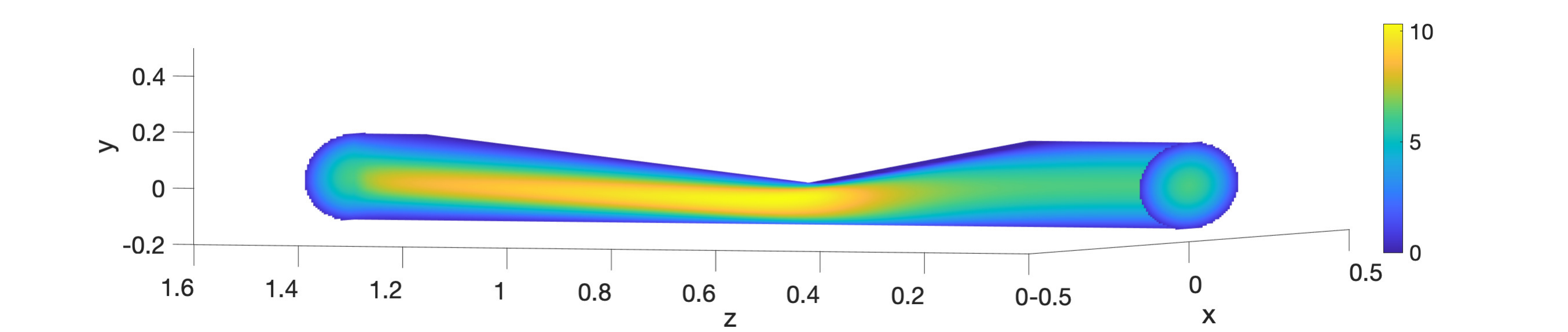}  
\caption{\label{artery100} Shown above are contour slices of the velocity magnitude for the $\nu=\frac{1}{500}$ solution found using Anderson accelerated Picard followed by Newton iterations until $H^1$ convergence of successive iterates to $10^{-10}$.}
\end{figure}

Our third test problem for CDA-Uzawa is for a stenotic artery model studied in \cite{TKLV24}.  The vessel domain is constructed as a 3D cylinder (radius $r=0.16cm$, length $l=1.6cm$) with significant deformation through compressions on the top of the vessel.  A plot of the domain is shown in Figure \ref{arterymesh} along with the (pre-barycenter refined) mesh (restricted to the surface) used for our computations with units in $cm$.  For boundary conditions, we use no-slip velocity on the artery walls and Dirichlet parabolic inflow (at $z=0$) and outflow (at $z=1.6$) with a maximum velocity at the vessel center of 6.22 cm/s (providing a flow rate of 15mL/minute).  The viscosity is taken as $\nu=0.002 g/(cm\cdot s)$.

The discretization is constructed as a barycenter refinement of a regular tetrahedralization of the domain, providing 52,912 total elements and 1.33M total dof when equipped with $(P_3,P_2^{disc})$ SV elements.  A plot of the solution found on this mesh using Anderson accelerated Picard (following \cite{PRX19,PR25}) is shown in Figure \ref{artery100}.

\subsubsection{2D channel flow past a block}

\begin{figure}[H]
  \centering
       \includegraphics[scale=0.11]{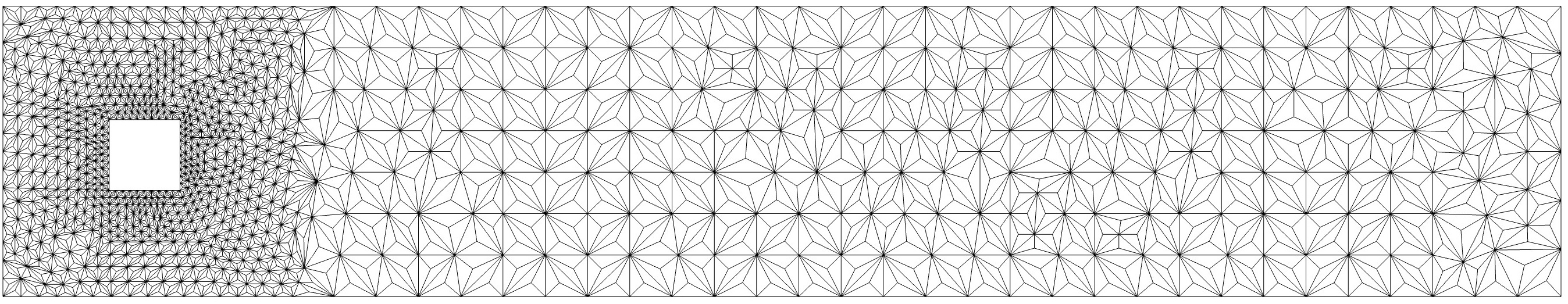}
  \caption{\label{cylpic} Shown above is a sample mesh for 2D channel flow past a block.}
\end{figure}

\begin{figure}[H]
  \centering
       \includegraphics[scale=0.34]{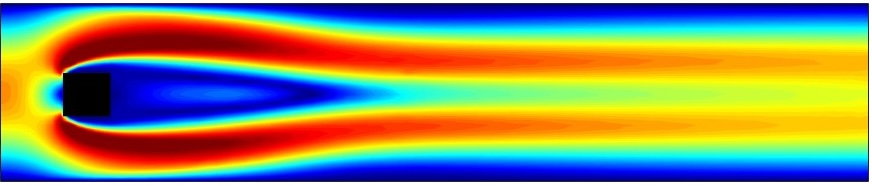} 
  \caption{\label{cylref} Shown above is a reference solution for 2D channel flow past a block with $Re$=100, shown as speed contours.}
\end{figure} 

Our next test problem setup is the channel flow past a block benchmark, which is also sometimes referred to as channel flow past a square cylinder \cite{ST96}. This problem has been widely studied both experimentally and numerically \cite{HRV24, R97, SDN99, TGO15}.  The computational domain is a rectangular channel of size $2.2\times0.41$ with a square obstacle of side length $0.1$ centered at $(0.2, 0.2)$ and with sides parallel to the channel.  We enforce no-slip velocity on the walls and block, and at the inflow and outflow, we enforce
\begin{align}
	u_1(0,y)=&u_1(2.2,y)=\frac{6}{0.41^2}y(0.41-y),\nonumber\\
	u_2(0,y)=&u_2(2.2,y)=0.
\end{align}

The external force is taken to be $f=0$. We use  $Re$=$\frac{UL}{\nu}$=100 and 150, which are calculated using the length scale $L=0.1$ as the width of the block and $U=1$, and then $\nu$ is chosen appropriately to produce these $Re$.  $(P_2, P_1^{disc})$ SV elements are used on a barycenter refined Delaunay mesh that provides 120K velocity and 89K pressure dof.  Figure \ref{cylpic} shows a mesh of the domain significantly coarser than what we use in the tests below.  Figure \ref{cylref} shows a $Re$=100 steady solution as speed contours.   We note that both $Re$=100 and 150 are known to produce periodic-in-time solutions in a time dependent code  \cite{HRV24}, but we are solving directly for steady state solutions at these $Re$.

\subsection{Convergence tests for CDA-Uzawa with noise-free partial solution data}

We now give results of the performance of CDA-Uzawa in the case when the partial solution data is exact.  We use four test problems: 2D driven cavity, 3D driven cavity, 2D channel flow past a block, and 3D flow in a stenotic artery model.

%\subsubsection{2D driven cavity}
For the 2D driven cavity, we test CDA-Uzawa convergence with varying $H$ and $\gamma$ and compare it to CDA-Picard convergence.  We first test the CDA-Uzawa scheme \eqref{CDAU1}-\eqref{CDAU2} for this test problem using $\gamma=1,\ 10,\ 100,\ 1000$, $Re$=5000 and 10000, and varying $H$.  For comparison, we also compute using the same parameters and CDA-Picard \cite{LHRV23} (the coupled scheme which is recovered from CDA-Uzawa if one replaces the two $p_k$'s in \eqref{CDAU1}-\eqref{CDAU2} by $p_{k+1}$'s).  

Results for $Re$=5000 are shown in Figure \ref{Uplots}, and we observe from the center plot that CDA-Uzawa is clearly accelerated by CDA; moreover, the smaller $H$ is, the faster the convergence.  Only for the finest $H$=1/64 was there a difference in results for $\gamma=1$ versus larger $\gamma$, which is in agreement with our theory that shows for $H$ sufficiently small, $\gamma$ must increase.  The plot at right in the figure shows convergence using $H$=1/64 and varying $\gamma$, and we observe the improvement of $\gamma\ge 10$ over $\gamma=1$. However, we also note that for $\gamma\ge 100$, convergence below $10^{-8}$ is never achieved (due to linear solver error introduced by larger condition numbers induced by larger $\gamma$ since the grad-div stabilization matrix is singular).  The plot at left in the figure shows convergence of CDA-Picard for varying $H$, and we observe that CDA-Picard convergence is nearly identical to CDA-Uzawa convergence using the same $H$ (provided $\gamma$ is chosen appropriately).  

\begin{figure}[H]
\center
CDA-Picard \hspace{1in} CDA-Uzawa \hspace{1in} CDA-Uzawa \\
\includegraphics[width = .3\textwidth, height=.26\textwidth,viewport=0 0 530 390, clip]{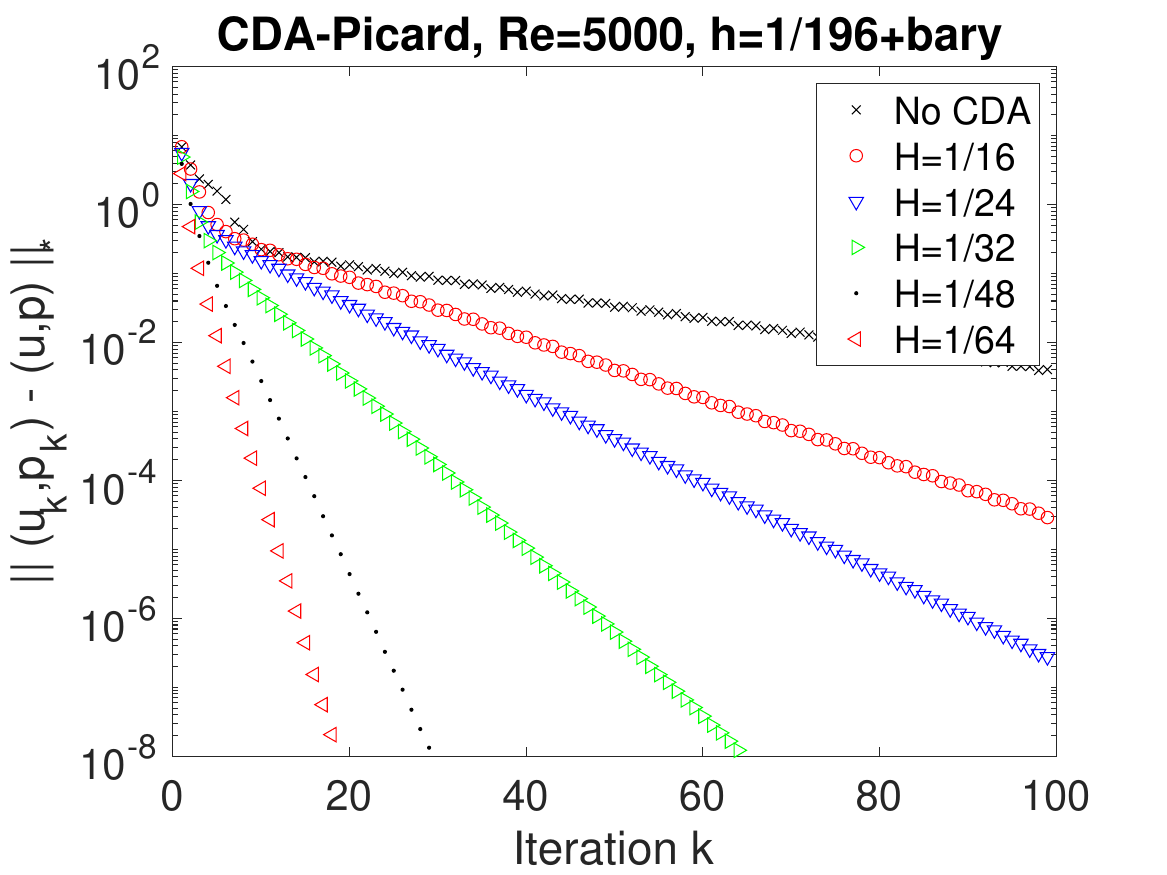}  
\includegraphics[width = .3\textwidth, height=.26\textwidth,viewport=0 0 530 390, clip]{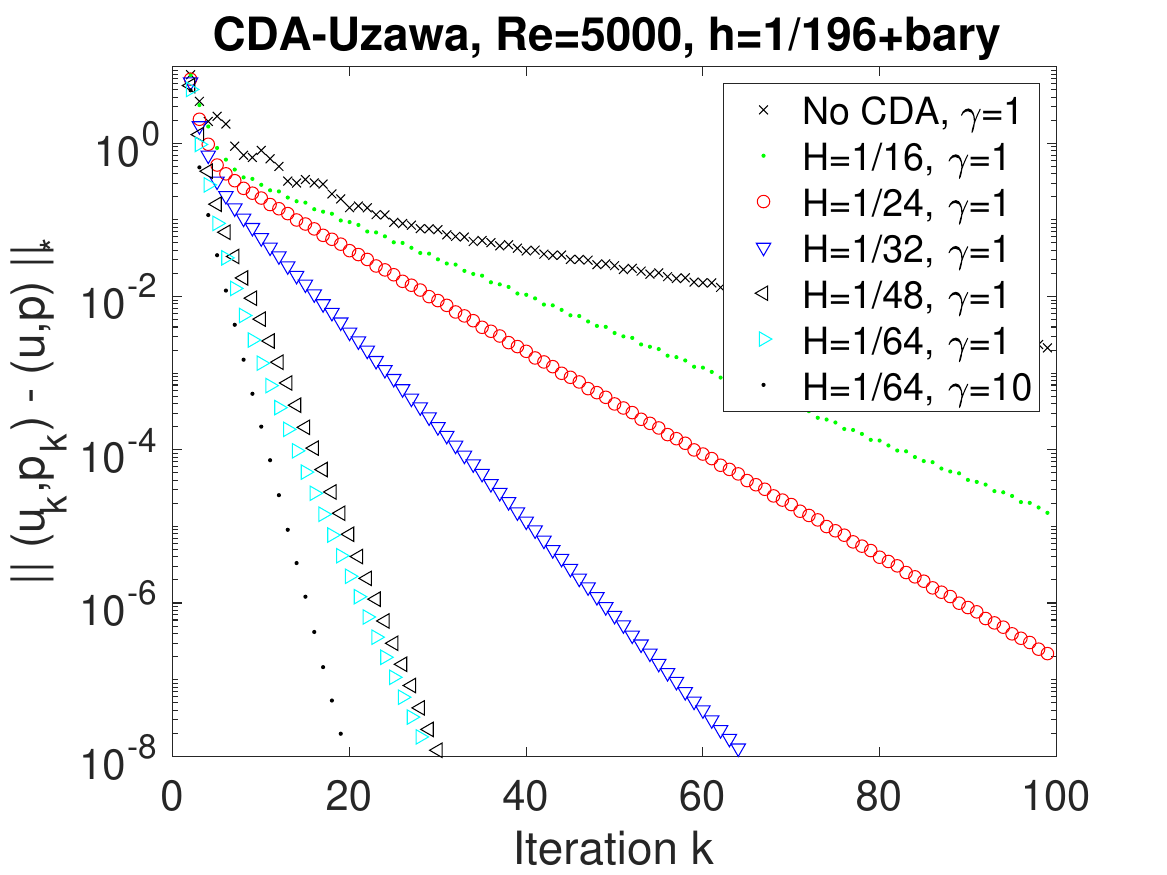}  
\includegraphics[width = .3\textwidth, height=.26\textwidth,viewport=0 0 530 390, clip]{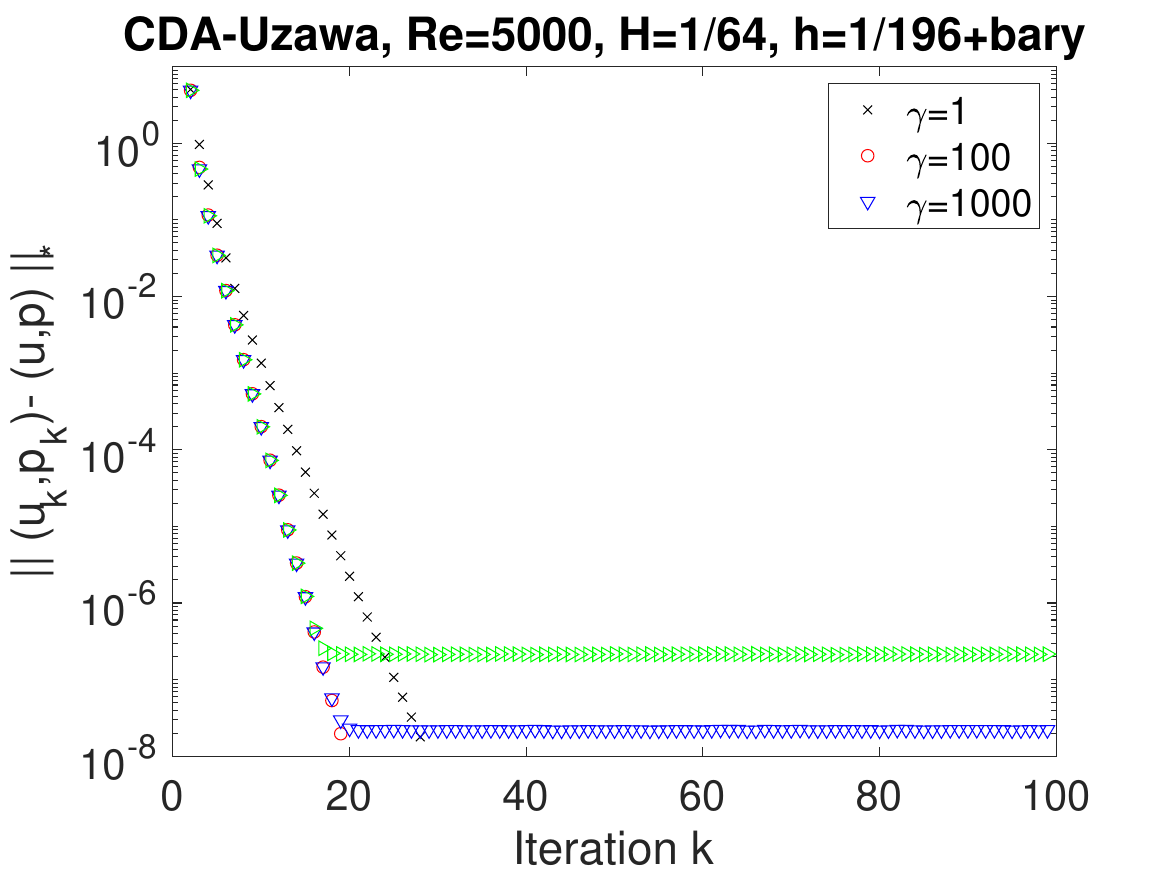}  
\caption{\label{Uplots} The plots above show convergence of (left) CDA-Picard with varying $H$, (center) CDA-Uzawa with varying $H$, and (right) CDA-Uzawa with varying $\gamma$ with $H$=1/64, all for $Re$=5000 2D driven cavity.   All tests use $(P_2,P_1^{disc})$ Scott-Vogelius elements on a barycenter refinement of a $h=\frac{1}{196}$ uniform mesh.}
\end{figure}

We repeat the 2D cavity test using $Re$=10000, and results are shown in Figure \ref{Uplots2}.  We observe that smaller $H$ is needed to achieve and accelerate convergence  (in agreement with the theory that $H$ needs to be smaller as $Re$ increases).  The key takeaway is that just as in the case of $Re=$5000, CDA-Uzawa performs very similar to CDA-Picard for the same $H$.

\begin{figure}[H]
\center
CDA-Picard \hspace{1in} CDA-Uzawa \hspace{1in} CDA-Uzawa \\
\includegraphics[width = .3\textwidth, height=.26\textwidth,viewport=0 0 530 390, clip]{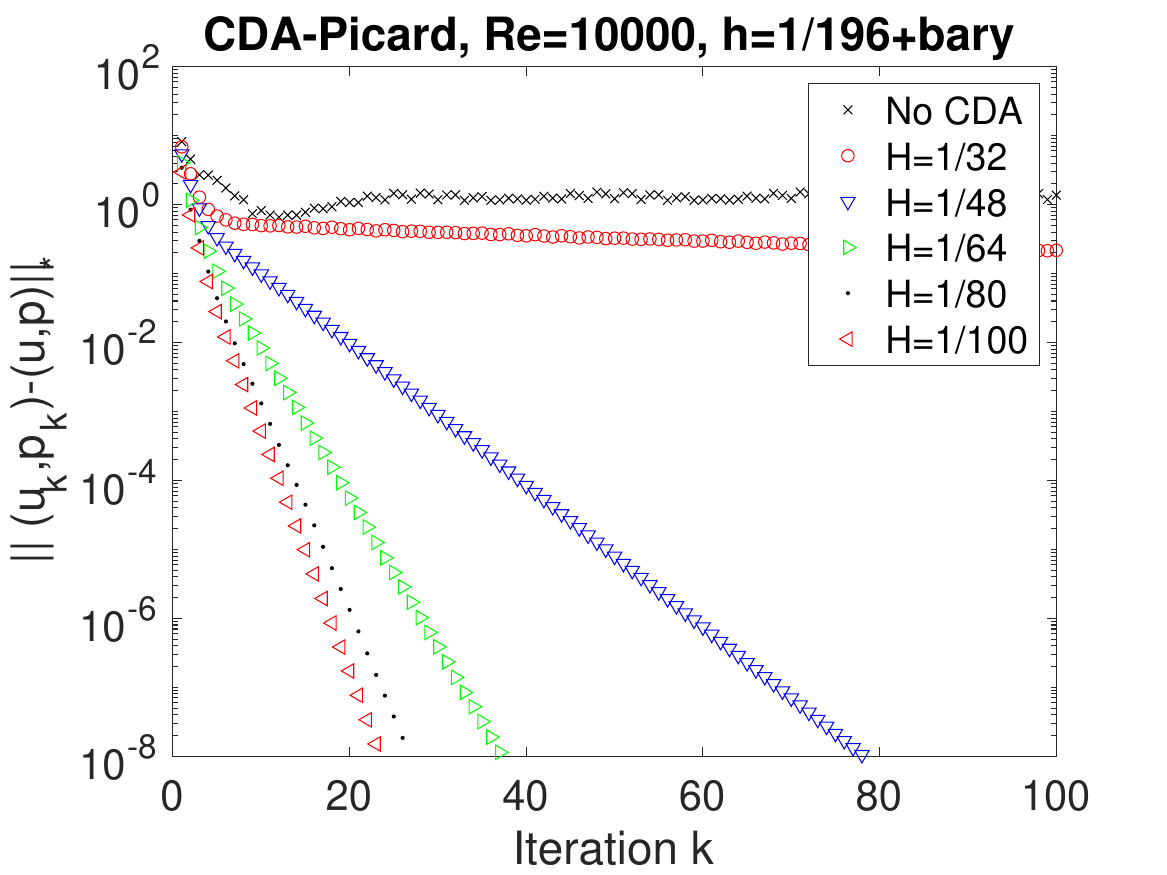}  
\includegraphics[width = .3\textwidth, height=.26\textwidth,viewport=0 0 530 390, clip]{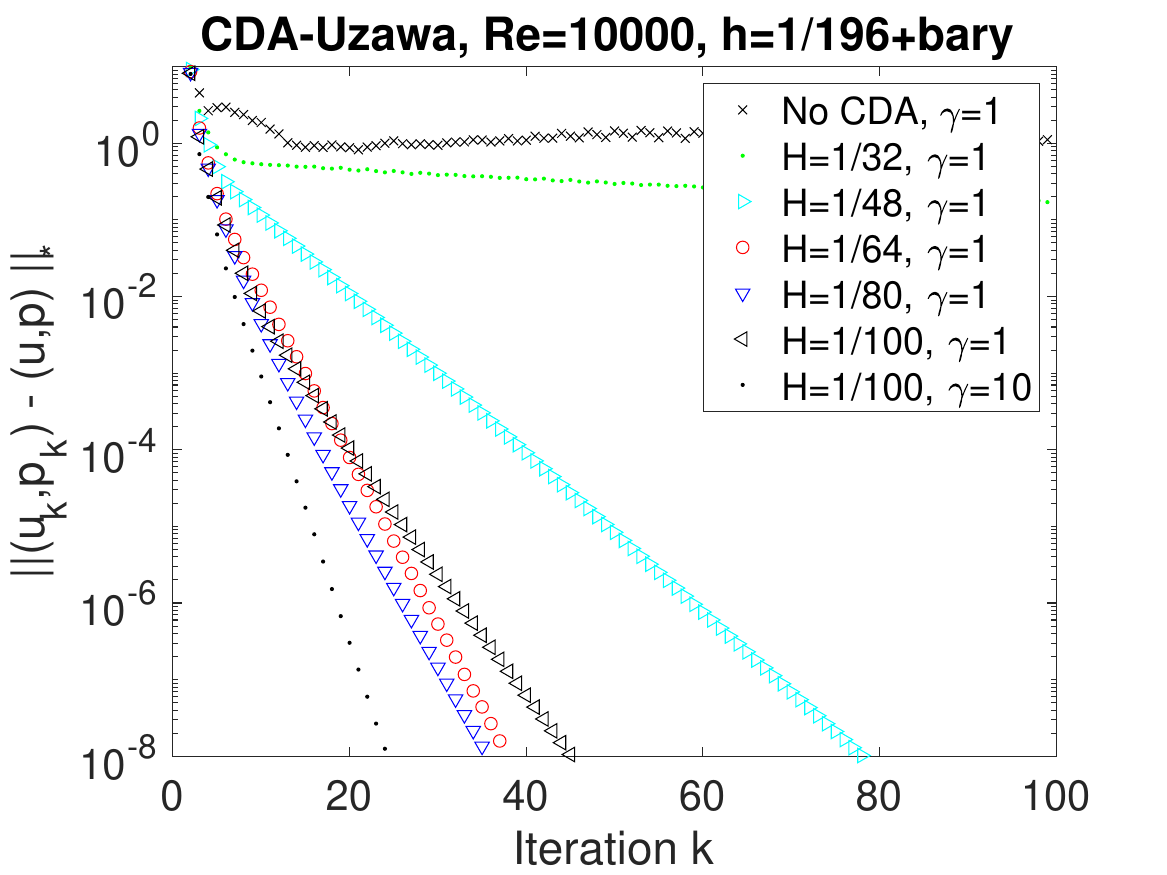}  
\includegraphics[width = .3\textwidth, height=.26\textwidth,viewport=0 0 535 390, clip]{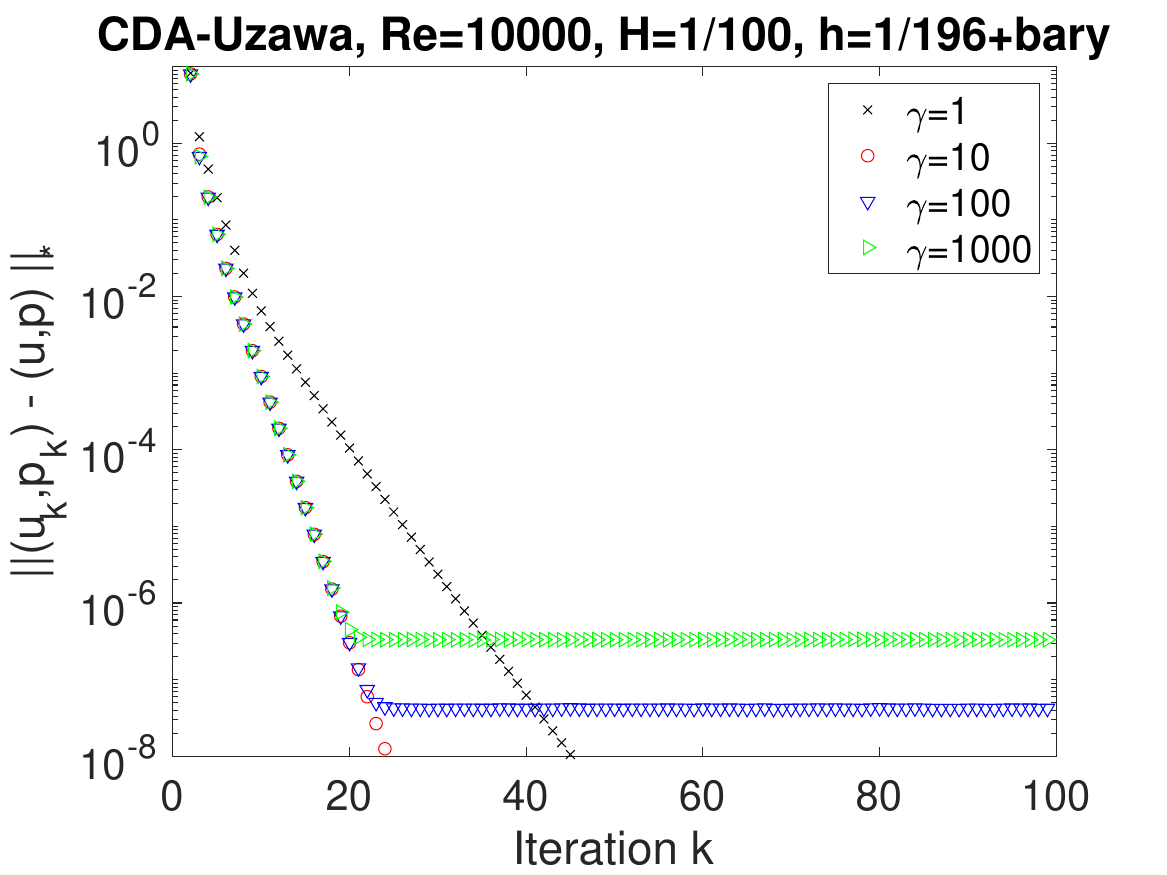}  
\caption{\label{Uplots2} The plots above show convergence of (left) CDA-Picard with varying $H$, (center) CDA-Uzawa with varying $H$, and (right) CDA-Uzawa with varying $\gamma$ with $H$=1/64, all for $Re$=10000 2D driven cavity.  All tests use $(P_2,P_1^{disc})$ Scott-Vogelius elements.}
\end{figure}

%\subsubsection{3D driven cavity}

%We next give results for CDA-Uzawa on the 3D driven cavity problem. 
Convergence results for CDA-Uzawa on the 3D driven cavity problem are shown in Figure \ref{cav3dconv}, and we observe that CDA-Uzawa is quite effective, even with relatively large $H$.  For both $Re$=1000 and 2000, there is a sharp cutoff of how small $H$ needs to be in order to achieve convergence.  

\begin{figure}[H]
\center
CDA-Uzawa $Re$=1000 \hspace{1in} CDA-Uzawa $Re$=2000\\
\includegraphics[width = .4\textwidth, height=.28\textwidth,viewport=0 0 530 390, clip]{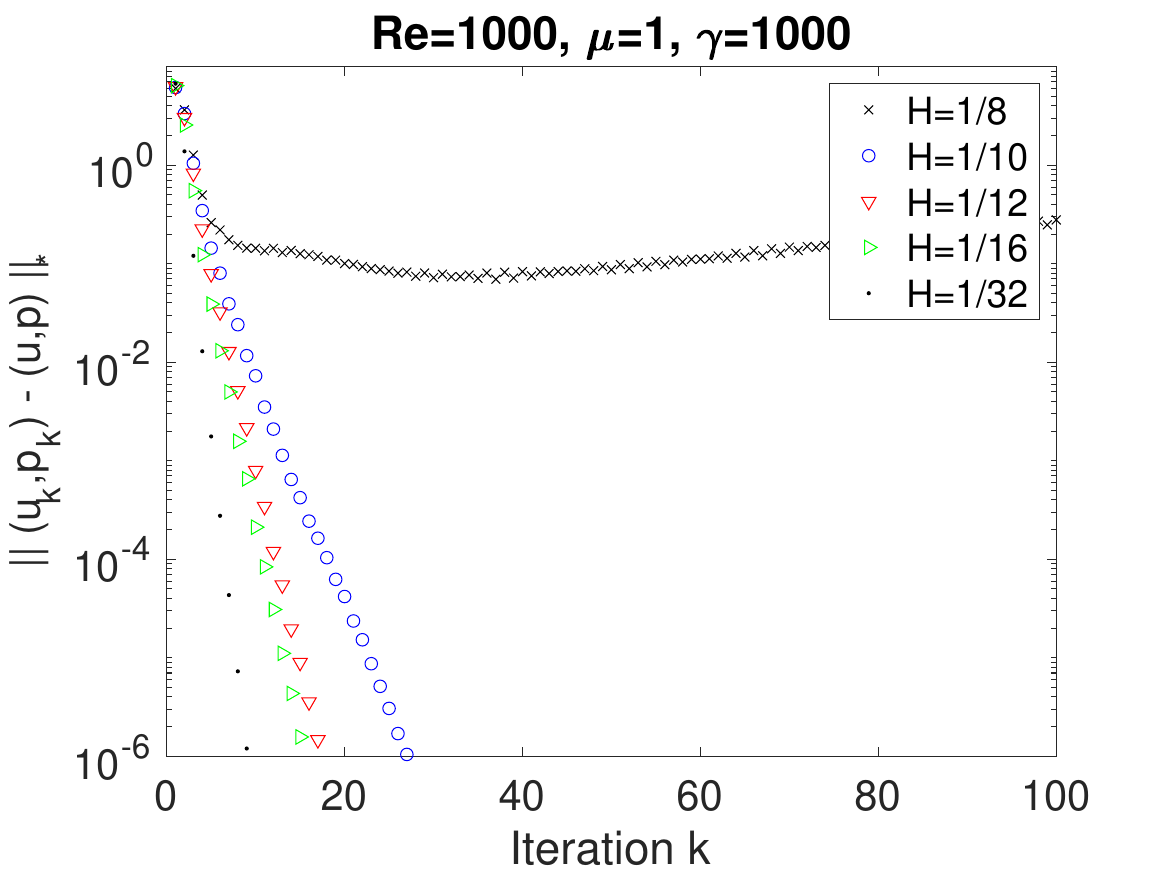}  
\includegraphics[width = .4\textwidth, height=.28\textwidth,viewport=0 0 530 390, clip]{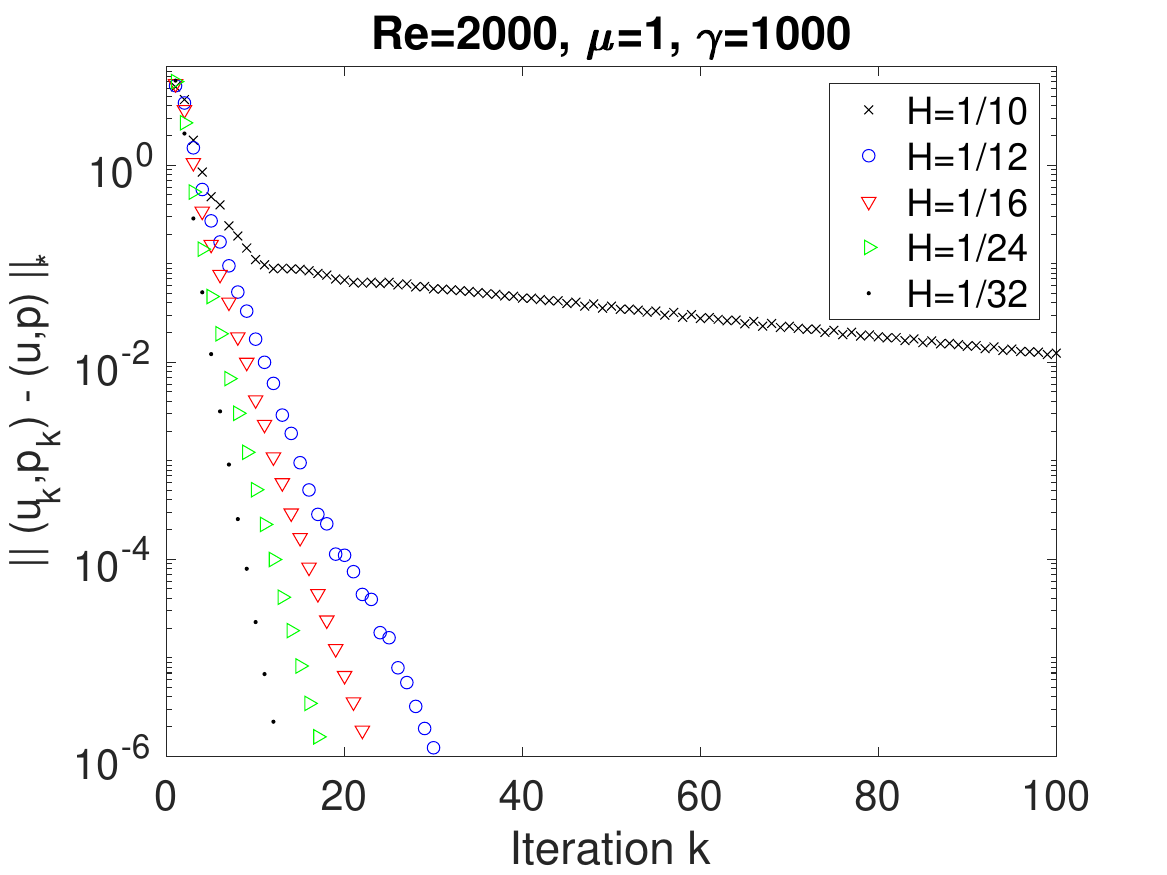}  
\caption{\label{cav3dconv} The plots above show convergence of CDA-Uzawa for (left) $Re$=1000 and (right) $Re$=2000 3D driven cavity flow.  All tests use $(P_3,P_2^{disc})$ SV elements.}
\end{figure}

%\subsubsection{2D channel flow past a block}
Convergence results for CDA-Uzawa on the 2D channel flow past a block are shown in Figure \ref{blockconv}.  For $Re$=100, we observe that Picard and Uzawa both converge at about the same rate without any CDA.  With CDA, especially with $H=2.2/24$ and smaller, convergence is significantly accelerated in CDA-Uzawa.  Interestingly, with large $H=2.2/4$ (just 16 total data measurement points), the convergence is slightly worse.  This does not contradict the theory because even though CDA transforms the Lipschitz constant from $\alpha$ to $O(H^{1/2} \alpha)$, there are constants involved in the latter, and so being slightly worse when $H$ is large is possible.  For $Re$=150, Uzawa and Picard fail, but CDA-Uzawa converges once $H$ is 2.2/20 or smaller.

Note that in these tests and those above, Uzawa and Picard give very similar behavior.  This is because of the particular Uzawa scheme we choose as well as the choice of spaces, which makes Uzawa equivalent to the iterated penalty Picard method.  

\begin{figure}[H]
\center
CDA-Uzawa $Re$=100 \hspace{1in} CDA-Uzawa $Re$=150\\
\includegraphics[width = .4\textwidth, height=.28\textwidth,viewport=0 0 530 390, clip]{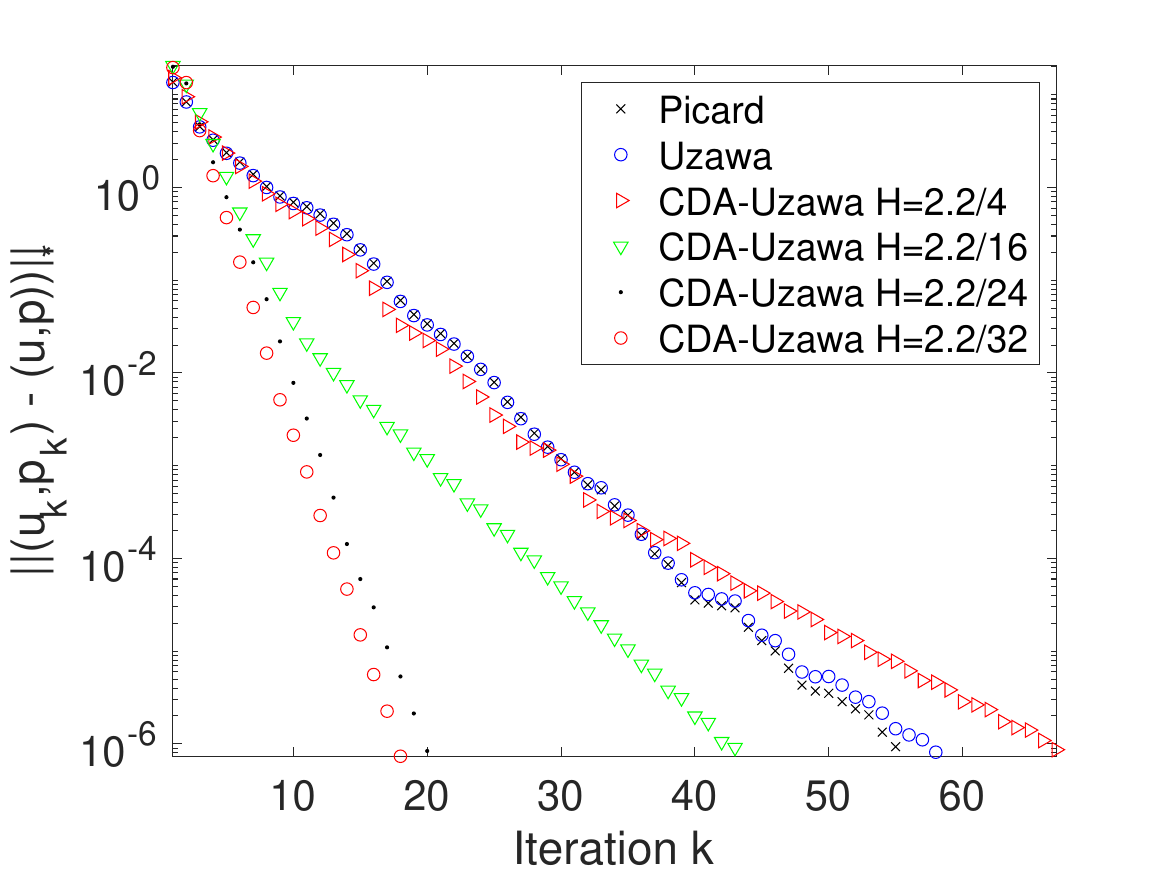}  
\includegraphics[width = .4\textwidth, height=.28\textwidth,viewport=0 0 530 390, clip]{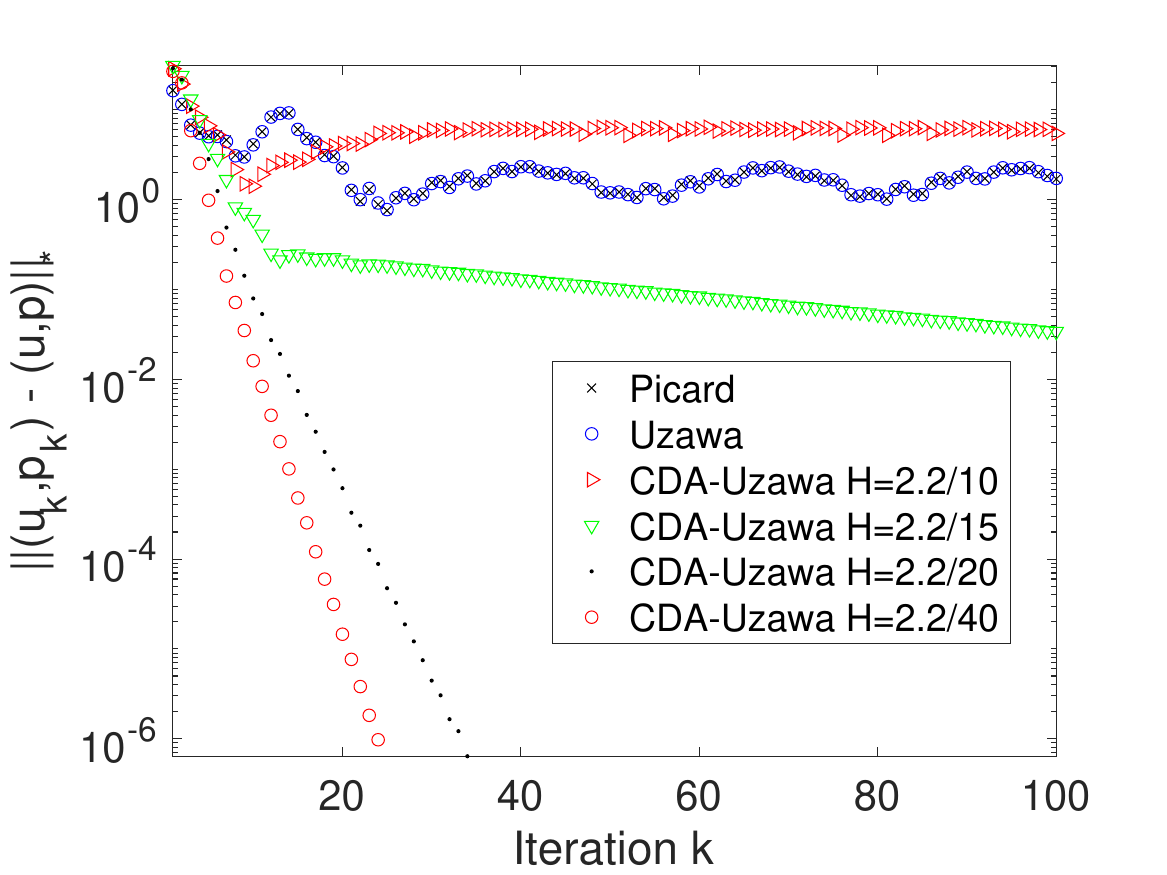}  
\caption{\label{blockconv} The plots above show convergence of 2D channel flow past a block with (left) $Re$=100 and (right) $Re$=150. }
\end{figure}

Our last noise-free test uses the stenotic artery problem.  Here, we tested several values of $H$ and no CDA.  First, we observe that the Picard solver without CDA converges in about 85 iterations (Uzawa without CDA converged about the same as Picard, plot omitted).  CDA-Uzawa accelerates convergence, and the more data, the faster the convergence.  Using $H=.32/5$ corresponds to knowing the solution at 583 nodes, $H=.32/50$ corresponds to 5201 nodes, and $H=.32/15$ corresponds to 12045 nodes.

\begin{figure}[H]
\center
CDA-Uzawa Artery\\
\includegraphics[width = .4\textwidth, height=.28\textwidth,viewport=0 0 530 390, clip]{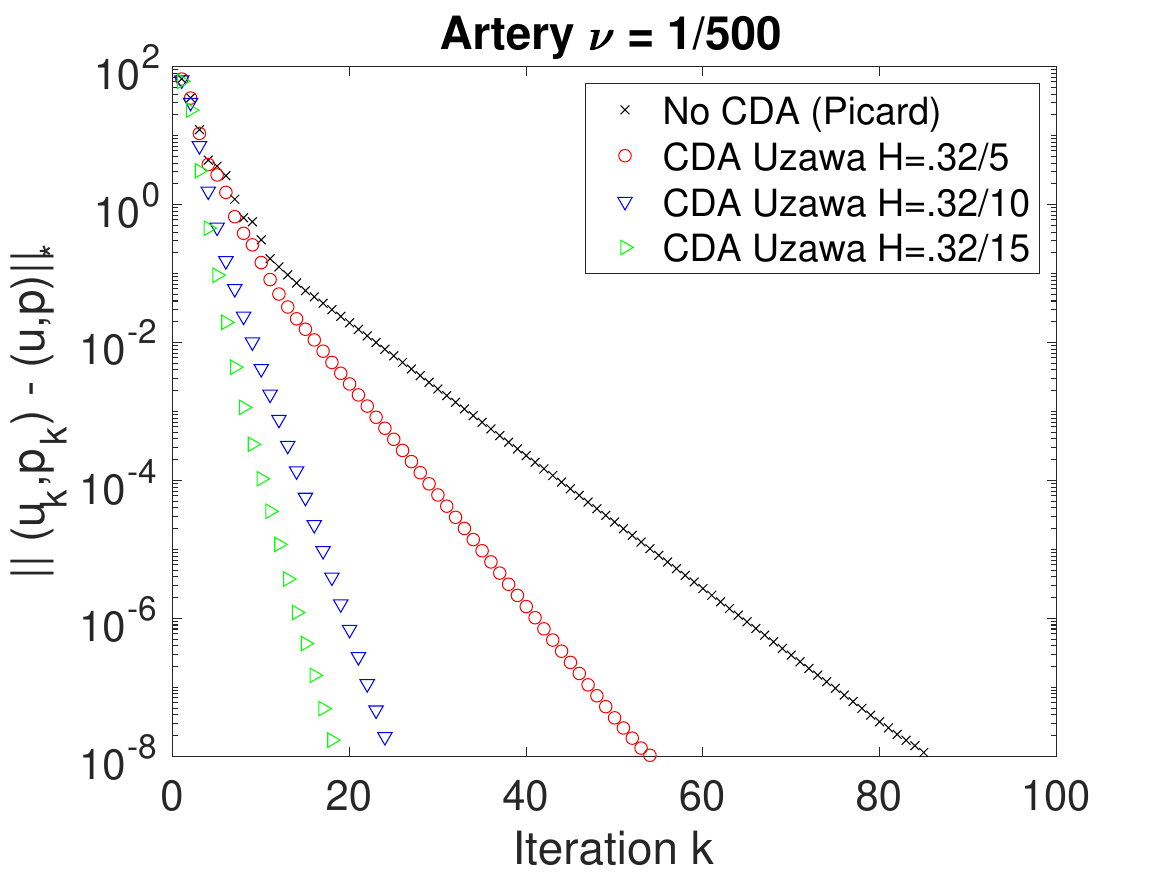}  
\caption{\label{arteryconv} Shown above is convergence of CDA-Uzawa for the stenotic artery problem with $\nu=\frac{1}{500}$ and varying $H$. }
\end{figure}

\subsection{Non-convergence of CDA-Uzawa using TH elements and noise-free partial solution data}

The theory of this paper assumes that $\nabla \cdot X_h\subseteq Q_h$, which holds for $(P_k,P_{k-1}^{disc})$ SV elements but not for $(P_k,P_{k-1})$ TH elements with $k\ge 2$.  Since $(P_2,P_1)$ TH elements are a very common choice in NSE computations, we now test CDA-Uzawa with this element choice.  We use the 2D driven cavity test with $Re$=5000, $h=1/64$ and $1/128$ barycenter refined uniform meshes, and we choose the grad-div stabilization parameter $\gamma=1$ for both CDA-Uzawa and for the true solution (the TH solution of the corresponding FEM NSE problem using grad-div stabilization).
Convergence results are shown in Figure \ref{Uplots3}.  Here, we observe linear convergence down to an error of about $10^{-4}$, where it bottoms out.  We note that it bottoms out slightly lower for the finer mesh (the error gets cut roughly in half).  Not coincidentally, the divergence error $\| \nabla \cdot u_h \| \approx 10^{-4}$ for these TH solutions. These plots show that the requirement of the theory $\nabla \cdot X_h\subseteq Q_h$ is necessary for the convergence in practice.

\begin{figure}[ht]
\center
CDA-Uzawa TH $h=\frac{1}{64}$\hspace{1in} CDA-Uzawa TH $h=\frac{1}{128}$ \\
\includegraphics[width = .4\textwidth, height=.26\textwidth,viewport=0 0 530 390, clip]{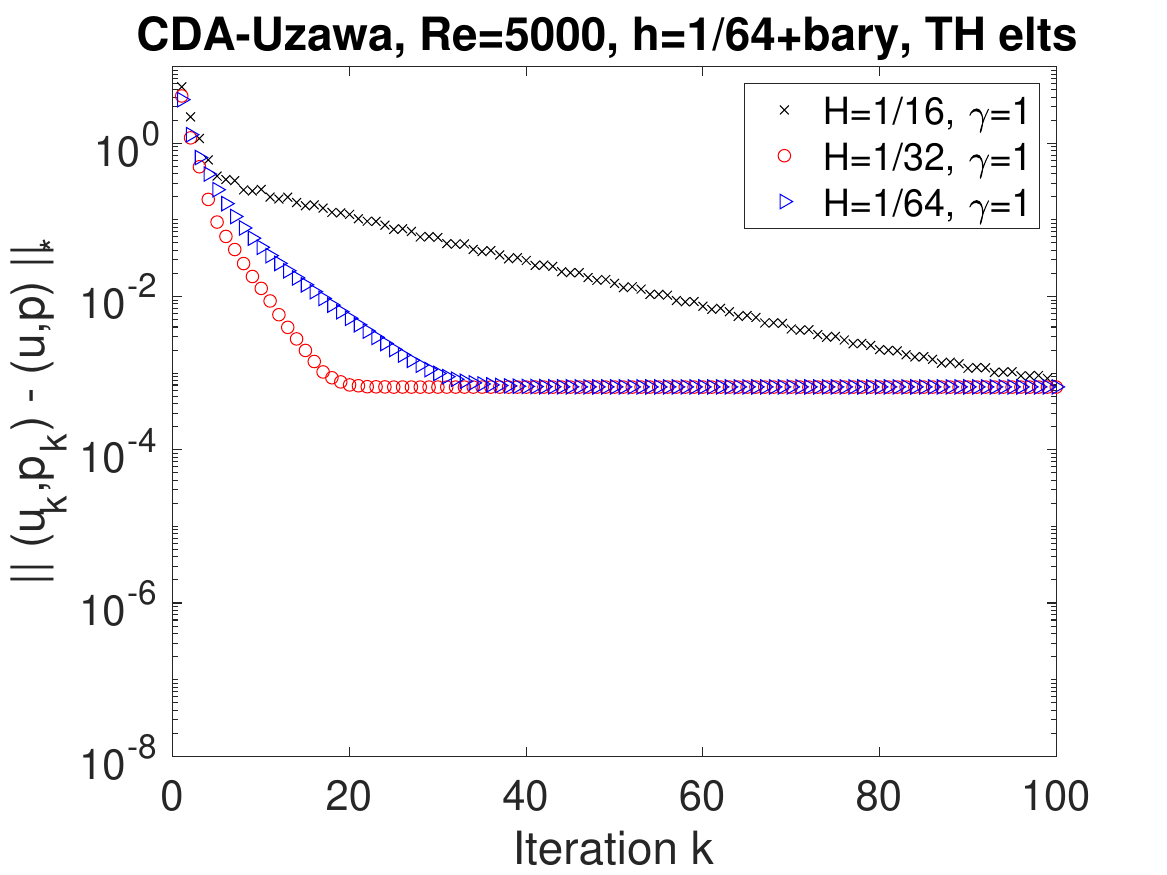}  
\includegraphics[width = .4\textwidth, height=.26\textwidth,viewport=0 0 530 390, clip]{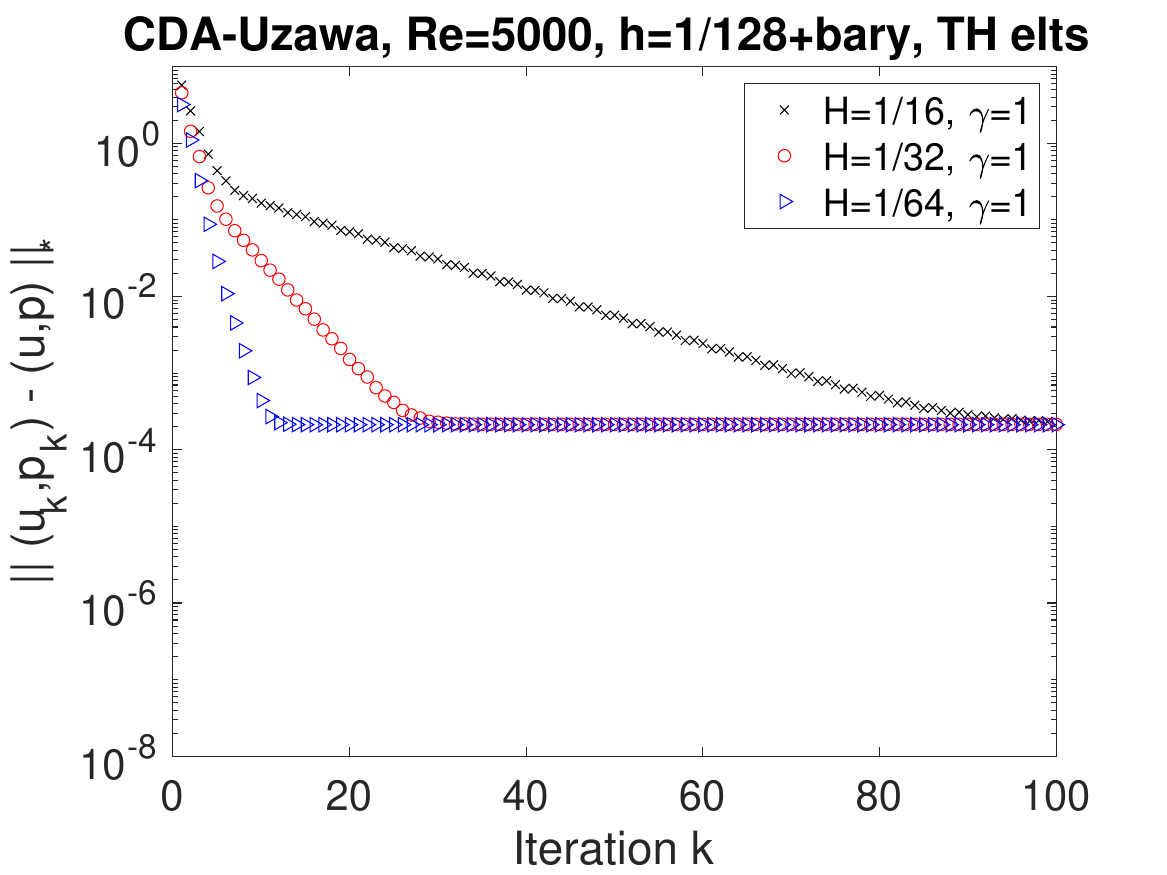}  
\caption{\label{Uplots3} The plots above show non-convergence of CDA-Uzawa using $(P_2,P_1)$ TH elements with $\gamma=1$ and varying $H$ on barycenter refined (left) $h$=1/64 and (right) $h$=1/128 meshes.}
\end{figure}

\subsection{Convergence of CDA-Uzawa with noisy partial solution data}

We now illustrate CDA-Uzawa convergence when the data contains noise. For our numerical tests, we consider CDA-Uzawa for the 2D and 3D driven cavity problems.  The noise is generated by first choosing a noise to signal ratio (NSR); we then modify the true velocity solution by adding component-wise uniform random numbers generated in $[-1, 1]$, which are scaled by the maximum of the true velocity solution and the NSR. 

The convergence results for the 2D driven cavity with $Re = 5000$, $H = 1/32$, $h = 1/128$, and varying NSR=0.05, 0.01 and 0.001, are shown in Figure \ref{2Dnoise} at left.  We observe that the difference in successive iterates $\| (u_k,p_k) - (u_{k-1},p_{k-1}) \|_*$ (which is also the nonlinear residual if we consider CDA-Uzawa as a fixed point iteration) converges linearly while the error $\| (u_k,p_k) - (u,p) \|_*$ converges linearly at first and then quickly bottoms at levels proportional to NSR, which agrees with our theory.  Note that the convergence was tested for other $h$ and $H$, and results showed analogous behavior, so we omit them here.

To avoid the large errors caused by noisy data, we also tested a strategy where we ran CDA-Uzawa until $\| (u_k,p_k) - (u_{k-1},p_{k-1}) \|_* < 10^{-4}$ and then switched to usual Newton (i.e., no CDA or other stabilization).  That is, we use CDA-Uzawa to generate a good initial guess for Newton, noting that usual Newton with an initial guess of zero will only converge  for $Re\le 2500$ \cite{PRTX25}.  This is observed to be a very effective strategy, even for the largest NSR=0.05.  

\begin{figure}[h!]
    \centering
    CDA-Uzawa \hspace{1.5in} CDA-Uzawa + Newton  \\
    \includegraphics[width = .4\textwidth, height=.28\textwidth,viewport=0 0 1200 870, clip]{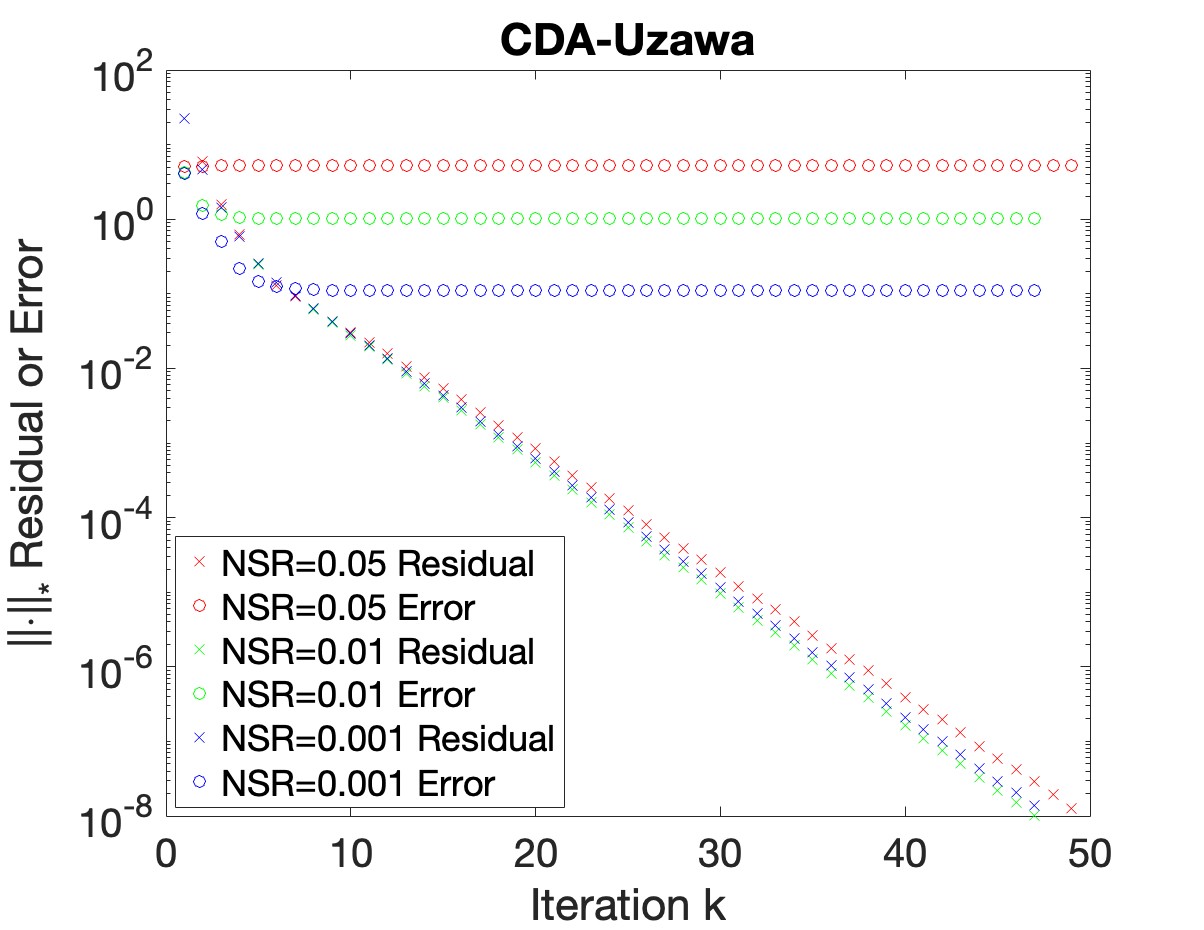}  
\includegraphics[width = .4\textwidth, height=.28\textwidth,viewport=0 0 1200 850, clip]{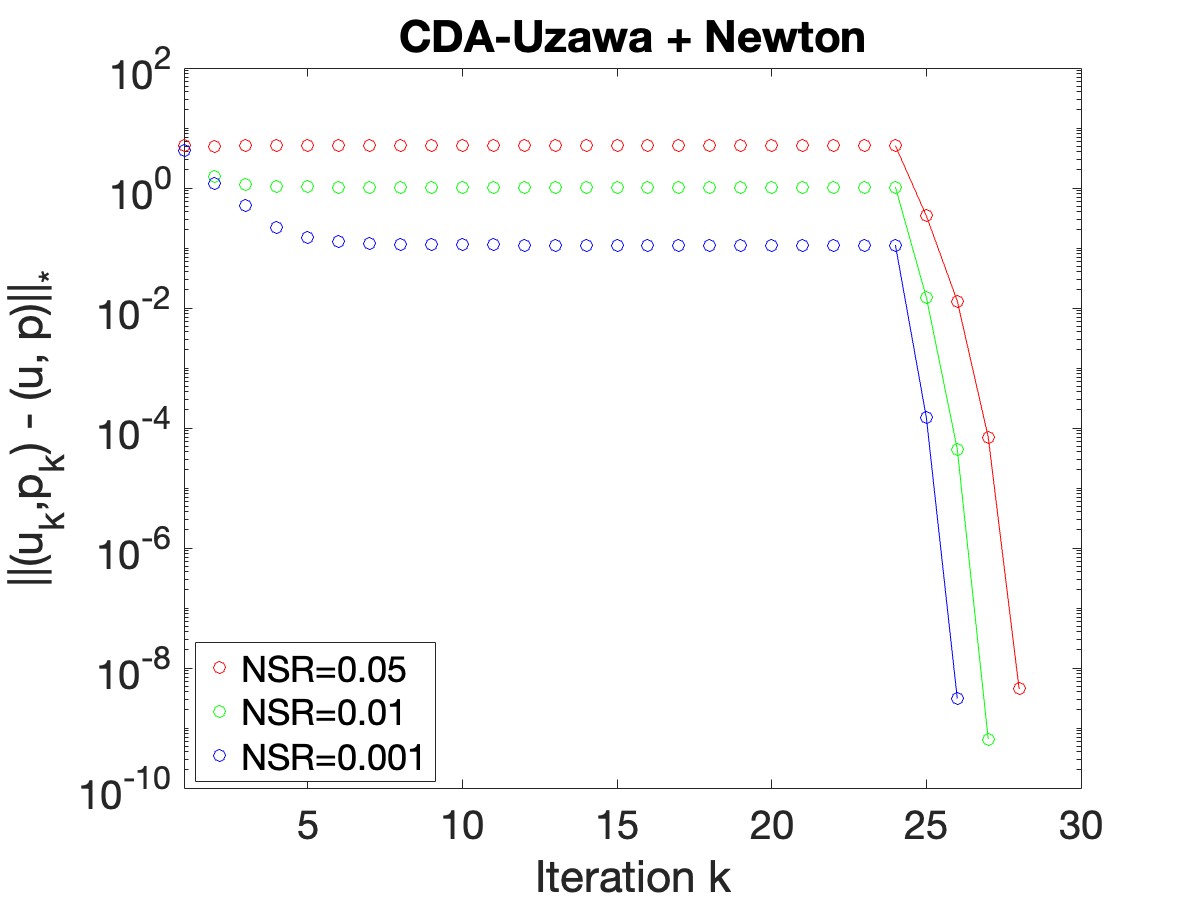}  
    \caption{ \label{2Dnoise} The plots above show the convergence of CDA-Uzawa with noisy partial solution data for $h=1/128$, $H = 1/32$, and $Re = 5000$. The plot on the left shows the residual and error of various NSR. The plot on the right illustrates the error convergence for various NSR once the residual is down to $10^{-4}$, and we switch to Newton.}
   
\end{figure}

%
%In order to try to improve the error, we also run CDA-Uzawa until the residual reaches $10^{-4}$. We then switch to Newton until the error reaches $10^{-8}$. The results of this experiment when $h = 1/128$ are also depicted in Figure \ref{2Dnoise} with the results for $h=1/64$ omitted. \\
%\indent The plot on the left shows both the $H^1$ residual convergence and $H^1$ error convergence of CDA-Uzawa with NSR$=0.05$, NSR$=0.01$, and NSR$=0.001$. We observe that the residuals of all three NSR values converge linearly down to $10^{-8}$ in about $60$ iterations. This is due to the fact that the noise is eliminated when taking the difference in successive iterates; therefore, these residual plots should look exactly like the numerical results illustrating Theorem \ref{thm1}. On the other hand, we see that the error bottoms out for all three NSR values. The error slightly improves for smaller NSR values, but no error reaches below $10^{-2}$. The plot on the right depicts the $H^1$ error for CDA-Uzawa with Newton for NSR$=0.05$, NSR$=0.01$, and NSR$=0.001$. We observe the error bottoming out for CDA-Uzawa with all three NSR values. Once the residual is down to $10^{-4}$, we switch to Newton, designated by the connected line. We observe immediate convergence to $10^{-8}$ (less than 10 iterations from when we start Newton) for all three NSR values.

We repeat this test for the 3D driven cavity with $Re$=1000 using $\mathcal{M} = 11$, $H= 1/32$, and $\gamma = 10$. We generate noise in the same way as for the 2D case and again use NSR values of $0.05, 0.01, 0.001$. The results for CDA-Uzawa and CDA-Uzawa with Newton are shown in Figure \ref{cav3dnoise}.  From Figure \ref{cav3dnoise}, we see results analogous to the 2D case in Figure \ref{2Dnoise}. For each NSR value, the residual $\| (u_k,p_k) - (u_{k-1},p_{k-1}) \|_*$ converges linearly, while the error $\| (u_k,p_k) - (u,p) \|_*$ bottoms out well above $10^{-2}$. Once Newton is applied, the error quickly converges for each NSR value.  We note that (unstabilized) Newton with an initial guess of 0 for this test problem will converge only up to $Re=$200 \cite{PRTX25}.

\begin{figure}[H]
    \centering
        CDA-Uzawa \hspace{1.5in} CDA-Uzawa + Newton  \\
    \includegraphics[width=0.4\linewidth, height=.28\textwidth,viewport=0 0 1300 1060, clip]{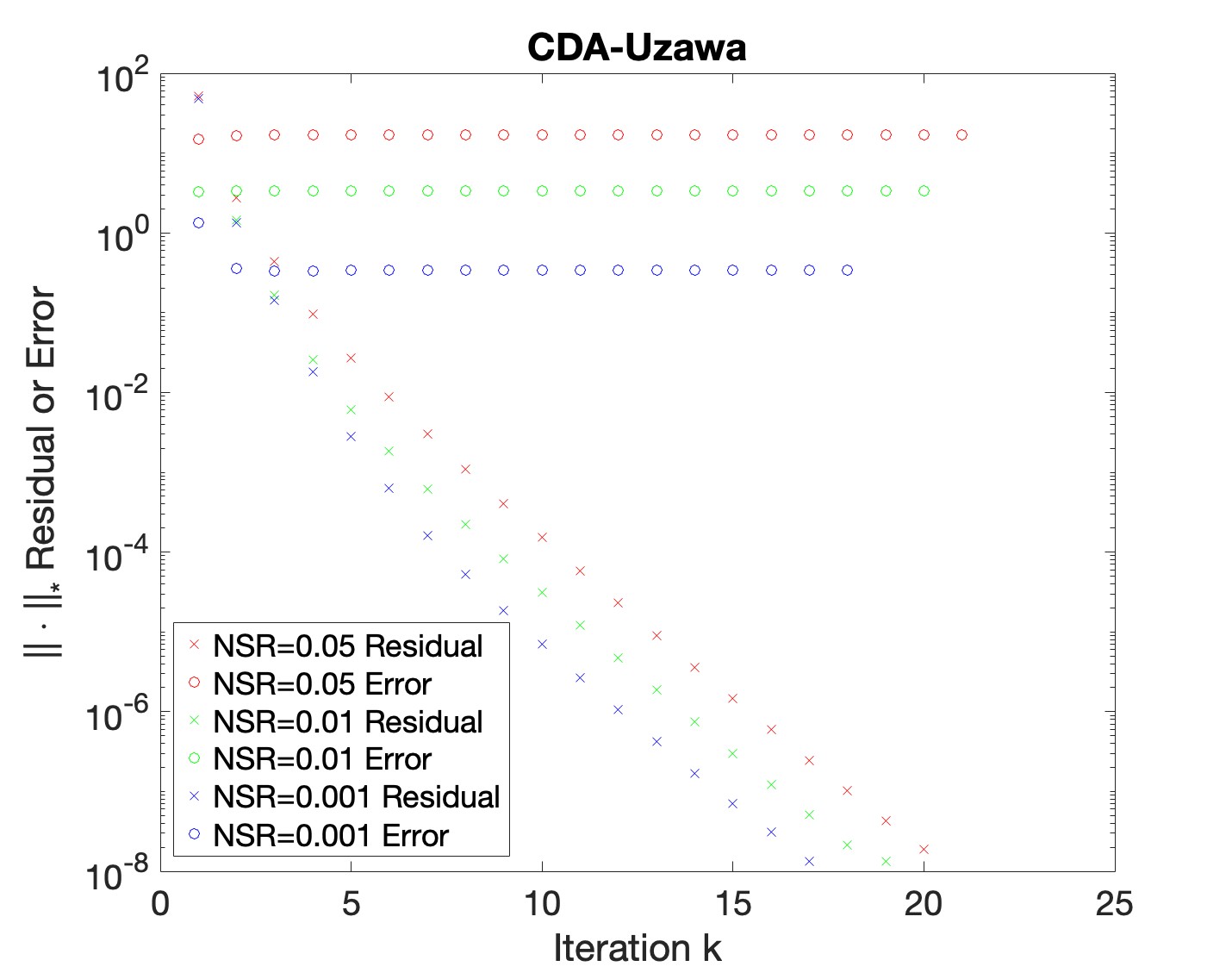}
     \includegraphics[width=0.4\linewidth, height=.28\textwidth,viewport=0 0 1100 840, clip]{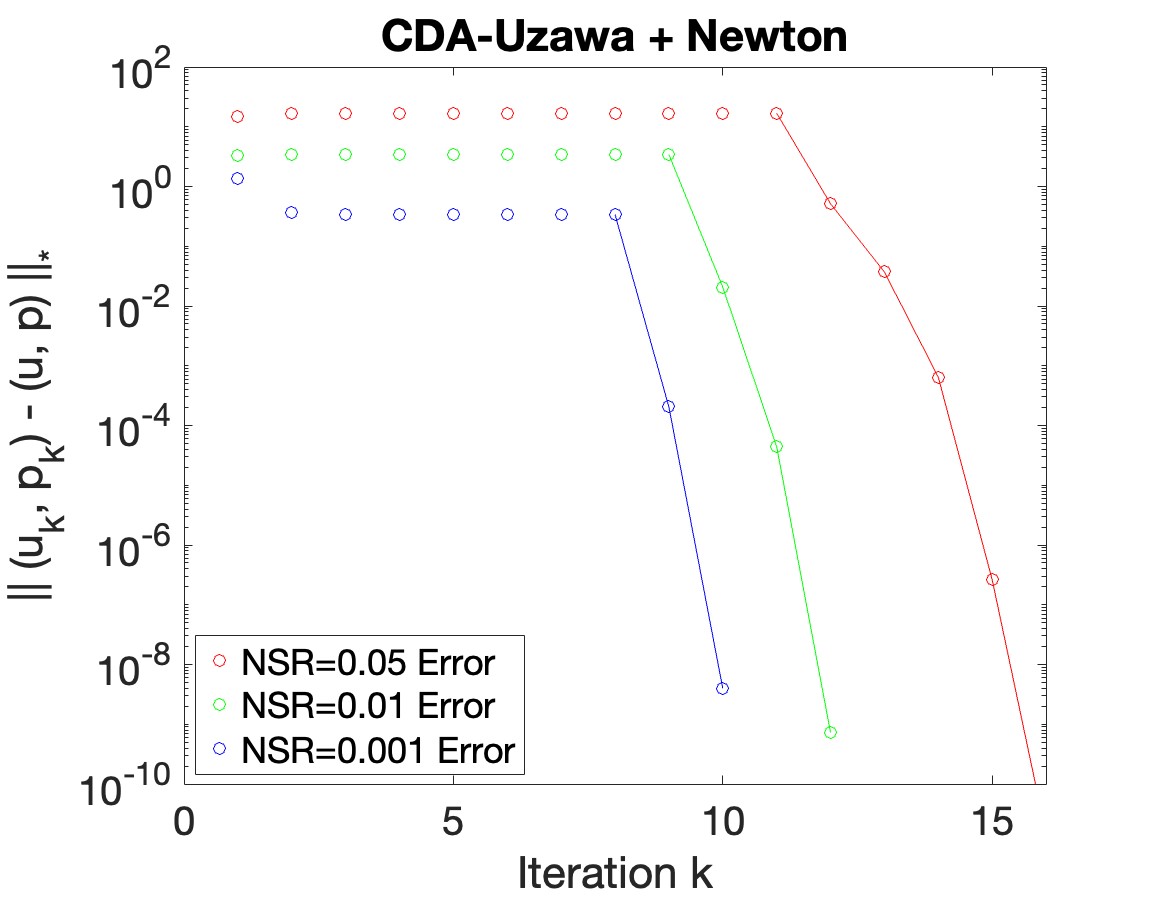}
    \caption{\label{cav3dnoise} The plots above show the CDA-Uzawa convergence with noisy partial solution data without Newton (left) and with Newton (right) for $\mathcal{M} = 11$, $H = 1/32$, $\gamma = 10$, and $Re = 1000$ for various NSR.}
    
\end{figure}

\section{Conclusions}

We proposed, analyzed and tested a CDA-Uzawa nonlinear solver  in order to create a significantly more efficient alternative to CDA-Picard for solving the NSE in the setting where partial solution data is available.  While Uzawa type methods are known to be much more efficient than Picard iterations due to the easier linear systems needing solved, they are also known to converge slowly and not be robust with respect to $Re$.  We have shown analytically and numerically that the proposed CDA-Uzawa solver, due to being equipped with CDA, grad-div stabilization, and particular choice of finite element spaces, has essentially no loss in convergence rate compared to CDA-Picard. 

\section{Acknowledgements}
The authors thank Dr. Jimena Tempestti and Prof. Alessandro Veneziani for providing a mesh and assistance with the stenotic artery test problem.

\section{Data Availability}

Data will be made available on request.

\section{Declaration of competing interest}

The authors declare the following financial interests / personal relationships which may be considered as competing interests: All authors report financial support from the Department of Energy, grant DE-SC0025292.

\bibliographystyle{plain}
%\bibliography{graddiv}

\begin{thebibliography}{10}

\bibitem{arnold:qin:scott:vogelius:2D}
D.~Arnold and J.~Qin.
\newblock Quadratic velocity/linear pressure {S}tokes elements.
\newblock In R.~Vichnevetsky, D.~Knight, and G.~Richter, editors, {\em Advances in Computer Methods for Partial Differential Equations VII}, pages 28--34. IMACS, 1992.

\bibitem{AOT14}
A.~Azouani, E.~Olson, and E.~S. Titi.
\newblock Continuous data assimilation using general interpolant observables.
\newblock {\em Journal of Nonlinear Science}, 24:277--304, 2014.

\bibitem{AT14}
A.~Azouani and E.~Titi.
\newblock Feedback control of nonlinear dissipative systems by finite determining parameters - a reaction-diffusion paradigm.
\newblock {\em Evolution Equations and Control Theory}, 3(4):579--594, 2014.

\bibitem{benzi}
M.~Benzi and M.~Olshanskii.
\newblock An augmented {L}agrangian-based approach to the {O}seen problem.
\newblock {\em SIAM J. Sci. Comput.}, 28:2095--2113, 2006.

\bibitem{BGHRR25}
C.~Bernardi, V.~Girault, F.~Hecht, P.-A. Raviart, and B.~Riviere.
\newblock {\em Mathematics and Finite Element Discretizations of Incompressible {N}avier-{S}tokes Flows}.
\newblock Society for Industrial and Applied Mathematics, Philadelphia, PA, 2024.

\bibitem{ARR_2022}
A.~Biswas, K.~R. Brown, and V.~R. Martinez.
\newblock Mesh-free interpolant observables for continuous data assimilation.
\newblock {\em Ann. Appl. Math.}, 38(3):296--355, 2022.

\bibitem{AJS_2022}
A.~Biswas, J.~Tian, and S.~Ulusoy.
\newblock Error estimates for deep learning methods in fluid dynamics.
\newblock {\em Numer. Math.}, 151(3):753--777, 2022.

\bibitem{BB12}
Steffen B{\"o}rm and Sabine Le~Borne.
\newblock {$\mathcal H$}-{LU} factorization in preconditioners for augmented {L}agrangian and grad-div stabilized saddle point systems.
\newblock {\em Internat. J. Numer. Methods Fluids}, 68(1):83--98, 2012.

\bibitem{BS08}
S.C. Brenner and L.~R. Scott.
\newblock {\em The Mathematical Theory of Finite Element Methods}, volume~15 of {\em Texts in Applied Mathematics}.
\newblock Springer Science+Business Media, LLC, 2008.

\bibitem{CH22}
E.~Carlson, J.~Hudson, A.~Larios, V.~R. Martinez, E.~Ng, and J.~P. Whitehead.
\newblock Dynamically learning the parameters of a chaotic system using partial observations.
\newblock {\em DCDS}, 32(8):3809--3839, 2022.

\bibitem{CHS17}
P.~Chen, J.~Huang, and H.~Sheng.
\newblock Solving steady incompressible {N}avier-{S}tokes equations by the {A}rrow-{H}urwicz method.
\newblock {\em Journal of Computational and Applied Mathematics}, 311:100--114, 2017.

\bibitem{CHS15}
P.~Chen, J.~Huang P, and H.~Sheng.
\newblock Some {U}zawa methods for steady incompressible {N}avier-{S}tokes equations discretized by mixed element methods.
\newblock {\em J. Comput. Appl. Math.}, 273:313--325, 2015.

\bibitem{CFLS25}
A.~Cibik, R.~Fang, W.~Layton, and F.~Siddiqua.
\newblock Adaptive parameter selection in nudging based data assimilation.
\newblock {\em Computer Methods in Applied Mechanics and Engineering}, 433(117526), 2025.

\bibitem{C93}
R.~Codina.
\newblock {An iterative penalty method for the finite element solution of the stationary Navier-Stokes equations}.
\newblock {\em Computer Methods in Applied Mechanics and Engineering}, 110:237--262, 1993.

\bibitem{ECG05}
E.~Erturk, T.~C. Corke, and C.~G\"okc\"ol.
\newblock Numerical solutions of {2D}-steady incompressible driven cavity flow at high {R}eynolds numbers.
\newblock {\em Int. J. Numer. Methods Fluids}, 48:747--774, 2005.

\bibitem{FLMW24}
A.~Farhat, A.~Larios, V.~Martinez, and J.~Whitehead.
\newblock Identifying the body force from partial observations of a two-dimensional incompressible velocity field.
\newblock {\em Physical Review Letters}, 9:054602, 2024.

\bibitem{FRV25}
V.~Fisher, L.~Rebholz, and D.~Vargun.
\newblock An efficient splitting iteration for a {CDA}-accelerated solver for incompressible flow problems.
\newblock {\em Submitted}, 2025.

\bibitem{GLNR24}
B.~Garcia-Archilla, X.~Li, J.~Novo, and L.~Rebholz.
\newblock Enhancing nonlinear solvers for the {N}avier-{S}tokes equations with continuous (noisy) data assimilation.
\newblock {\em Computer Methods in Applied Mechanics and Engineering}, 424(116903):1--15, 2024.

\bibitem{GN20}
B.~Garcia-Archilla and J.~Novo.
\newblock Error analysis of fully discrete mixed finite element data assimilation schemes for the {N}avier-{S}tokes equations.
\newblock {\em Advances in Computational Mathematics}, pages 46--61, 2020.

\bibitem{GRVZ23}
P.~Guven Geredeli, L.~Rebholz, D.~Vargun, and A.~Zytoon.
\newblock Improved convergence of the {A}rrow-{H}urwicz iteration for the {N}avier-{S}tokes equation via grad-div stabilization and {A}nderson acceleration.
\newblock {\em Journal of Computational and Applied Mathematics}, 422(114920):1--16, 2023.

\bibitem{GR86}
V.~Girault and P.-A. Raviart.
\newblock {\em Finite element methods for Navier-Stokes equations: Theory and Algorithms}.
\newblock Springer-Verlag, 1986.

\bibitem{G89b}
M.~Gunzburger.
\newblock {Iterative penalty methods for the Stokes and Navier-Stokes equations}.
\newblock {\em Proceedings from Finite Element Analysis in Fluids conference, University of Alabama, Huntsville}, pages 1040--1045, 1989.

\bibitem{GS19}
J.~Guzman and L.~R. Scott.
\newblock The {Scott-Vogelius} finite elements revisited.
\newblock {\em Math. Comp.}, 88(316):515--529, 2019.

\bibitem{HRV24}
E.~Hawkins, L.~Rebholz, and D.~Vargun.
\newblock Removing splitting/modeling error in projection/penalty methods for {N}avier-{S}tokes simulations with continuous data assimilation.
\newblock {\em Communications in Mathematical Research}, 40(1):1--29, 2024.

\bibitem{HR13}
T.~Heister and G.~Rapin.
\newblock Efficient augmented {L}agrangian-type preconditioning for the {O}seen problem using grad-div stabilization.
\newblock {\em Int. J. Numer. Meth. Fluids}, 71:118--134, 2013.

\bibitem{JLMNR17}
V.~John, A.~Linke, C.~Merdon, M.~Neilan, and L.~G. Rebholz.
\newblock On the divergence constraint in mixed finite element methods for incompressible flows.
\newblock {\em SIAM Review}, 59(3):492--544, 2017.

\bibitem{JP23}
M.S. Jolly and A.~Pakzad.
\newblock Data assimilation with higher order finite element interpolants.
\newblock {\em International Journal for Numerical Methods in Fluids}, 95:472--490, 2023.

\bibitem{LP24}
A.~Larios and Y.~Pei.
\newblock Nonlinear continuous data assimilation.
\newblock {\em Evolution Equations and Control Theory}, 13:329--348, 2024.

\bibitem{Laytonbook}
W.~Layton.
\newblock {\em An {I}ntroduction to the {N}umerical {A}nalysis of {V}iscous {I}ncompressible {F}lows}.
\newblock SIAM, Philadelphia, 2008.

\bibitem{LHRV23}
X.~Li, E.~Hawkins, L.~Rebholz, and D.~Vargun.
\newblock Accelerating and enabling convergence of nonlinear solvers for {N}avier-{S}tokes equations by continuous data assimilation.
\newblock {\em Computer Methods in Applied Mechanics and Engineering}, 416:1--17, 2023.

\bibitem{MS18}
H.~Morgan and L.R. Scott.
\newblock Towards a unified finite element method for the {S}tokes equations.
\newblock {\em SIAM Journal on Scientific Computing}, 40(1):A130--A141, 2018.

\bibitem{PR25}
S.~Pollock and L.~Rebholz.
\newblock {\em Anderson Acceleration for Numerical PDEs}.
\newblock Society for Industrial and Applied Mathematics, Philadelphia, PA, 2025.

\bibitem{PRTX25}
S.~Pollock, L.~Rebholz, X.~Tu, and M.~Xiao.
\newblock Analysis of the {P}icard-{N}ewton iteration for the {N}avier-{S}tokes equations: global stability and quadratic convergence.
\newblock {\em Journal of Scientific Computing}, 14:25:1--23, 2025.

\bibitem{PRX19}
S.~Pollock, L.~Rebholz, and M.~Xiao.
\newblock Anderson-accelerated convergence of {P}icard iterations for incompressible {N}avier-{S}tokes equations.
\newblock {\em SIAM Journal on Numerical Analysis}, 57:615-- 637, 2019.

\bibitem{RZ21}
L.~G. Rebholz and C.~Zerfas.
\newblock Simple and efficient continuous data assimilation of evolution equations via algebraic nudging.
\newblock {\em Numerical Methods for Partial Differential Equations}, 37(3):2588--2612, 2021.

\bibitem{R97}
W.~Rodi.
\newblock Comparison of {LES} and {RANS} calculations of the flow around bluff bodies.
\newblock {\em Journal of Wind Engineering and Industrial Aerodynamics}, 69-71:55--75, 1997.
\newblock Proceedings of the 3rd International Colloqium on Bluff Body Aerodynamics and Applications.

\bibitem{ST96}
M.~Sch$\ddot{\mbox{a}}$fer and S.~Turek.
\newblock The benchmark problem `flow around a cylinder' flow simulation with high performance computers {II}.
\newblock {\em in E.H. Hirschel (Ed.), Notes on Numerical Fluid Mechanics}, 52, Braunschweig, Vieweg:547--566, 1996.

\bibitem{scott:vogelius:conforming}
L.~R. Scott and M.~Vogelius.
\newblock Conforming finite element methods for incompressible and nearly incompressible continua.
\newblock In {\em Large-Scale Computations in Fluid Mechanics, Part 2}, volume 22-2 of {\em Lectures in Applied Mathematics}, pages 221--244. Amer. Math. Soc., 1985.

\bibitem{SZ90}
L.~R. Scott and S.~Zhang.
\newblock Finite element interpolation of nonsmooth functions satisfying boundary conditions.
\newblock {\em Math. Comp.}, 54(190):483--493, 1990.

\bibitem{SDN99}
A.~Sohankar, L.~Davidson, and C.~Norberg.
\newblock Large eddy simulation of flow past a square cylinder: Comparison of different subgrid scale models.
\newblock {\em Journal of Fluids Engineering}, 122(1):39--47, 11 1999.

\bibitem{TCEK23}
A.~Takhirov, A.~Cibik, F.~Eroglu, and S.~Kaya.
\newblock An improved {A}rrow-{H}urwicz method for the steady-state {N}avier-{S}tokes equations.
\newblock {\em Journal of Scientific Computing}, 96(52):1--21, 2023.

\bibitem{temam}
R.~Temam.
\newblock {\em {N}avier-{S}tokes equations}.
\newblock Elsevier, North-Holland, 1991.

\bibitem{TKLV24}
J.~Martín Tempestti, S.~Kim, B.~Lindsey, and A.~Veneziani.
\newblock A pseudo-spectral method for wall shear stress estimation from {D}oppler ultrasound imaging in coronary arteries.
\newblock {\em Cardiovascular Engineering and Technology}, 15(6):647--666, 2024.

\bibitem{TGO15}
F.X. Trias, A.~Gorobets, and A.~Oliva.
\newblock Turbulent flow around a square cylinder at {R}eynolds number 22,000: A {DNS} study.
\newblock {\em Computers \& Fluids}, 123:87--98, 2015.

\bibitem{zhang:scott:vogelius:3D}
S.~Zhang.
\newblock A new family of stable mixed finite elements for the {3D} {S}tokes equations.
\newblock {\em Math. Comp.}, 74(250):543--554, 2005.

\end{thebibliography}
%\input{output.bbl}

\end{document}